\newcommand{\setof}[1]{\left\{ {#1}\right\}}
\newcommand{\C}{{\mathbb{C}}}
\newcommand{\F}{{\mathbb{F}}}
\newcommand{\Q}{{\mathbb{Q}}}
\newcommand{\R}{{\mathbb{R}}}
\newcommand{\Z}{{\mathbb{Z}}}
\newcommand{\edit}[1]{\marginpar{\footnotesize{#1}}}
\def\lra{\longrightarrow}
\def\map#1{\,{\buildrel #1 \over \lra}\,}
\def\smap#1{{\buildrel #1 \over \to}}
\def\oo{\otimes}
\newcommand{\cond}{\mathfrak{c}}
\newcommand{\Hom}{\operatorname{Hom}}
\newcommand{\Tor}{\operatorname{Tor}}
\def\Pic{\operatorname{Pic}}
\newcommand\mycom[2]{\genfrac{}{}{0pt}{}{#1}{#2}}
\newcommand\barM{\overline M}
\newcommand\barN{\overline N}
\newcommand\barR{\overline R}
\setlist[enumerate]{leftmargin=.5in}
\setlist[itemize]{leftmargin=.5in}
\crefname{hypothesis}{Hypothesis}{Hypotheses}
\title{Computing the Conley Index: a Cautionary Tale\thanks{Submitted to the editors DATE.
\funding{K.M. was partially supported by the National Science Foundation under awards DMS-1839294 and HDR TRIPODS award CCF-1934924, DARPA contract HR0011-16-2-0033,  National Institutes of Health award R01 GM126555, and Air Force Office of Scientific Research under award number FA9550-23-1-0011. K.M. was also supported by a grant from the Simons Foundation. C. W. was supported by NSF grant 2001417.}}}
\author{Konstantin Mischaikow\thanks{Department of Mathematics, Rutgers University, New Brunswick, NJ
  (\email{mischaik@math.rutgers.edu}).}
\and Charles Weibel\thanks{Department of Mathematics, Rutgers University, New Brunswick, NJ
  (\email{weibel@math.rutgers.edu}).}
}
\begin{document}

\maketitle

\begin{abstract}

This paper concerns the computation and identification of the (homological) Conley index over the integers, in the context of  discrete dynamical systems generated by continuous maps. 
We discuss the significance with respect to nonlinear dynamics of using integer, as opposed to field, coefficients.
We translate the problem into the language of commutative ring theory.
More precisely, we relate shift equivalence in the category of finitely generated abelian groups to the classification of
$\Z[t]$-modules whose underlying abelian group is 
given.
We provide tools to handle the classification problem, but also highlight the associated computational challenges.
\end{abstract}

\begin{keywords}
Conley index, Shift equivalence, Localization of modules, Picard group.
\end{keywords}

\begin{MSCcodes}
68Q25, 68R10, 68U05
\end{MSCcodes}
\section{Introduction}
This paper concerns the computation and identification of the (homological) Conley index in the context of a discrete dynamical system generated by a continuous map $f\colon X\to X$.

The Conley index \cite{conley:cbms, robbin:salamon, mrozek:90, szymczak, franks:richeson, mischaikow:mrozek} is a powerful algebraic topological invariant for the analysis of nonlinear dynamical systems for at least two reasons. 
First, it can be computed using finite data, and thus is applicable in the context of computational or data driven dynamics.
Second, there are a variety of theorems in which knowledge of the Conley index leads to information about the structure of the dynamics, e.g., existence of nontrivial invariant sets \cite{conley:cbms}, heteroclinic orbits \cite{conley:smoller}, 
fixed points \cite{srzednicki,mccord:88}, 
periodic orbits \cite{mccord:mischaikow:mrozek}, chaotic dynamics \cite{mischaikow:mrozek,szymczak,day:frongillo}, etc.

The computation of the Conley index begins with the identification of a pair of compact sets $P_0 \subset P_1$, called an \emph{index pair} \cite{robbin:salamon}, 
where for the sake of simplicity we assume that $f(P_i)\subset P_i$, $i=0,1$.  
The (homological) Conley index of the index pair is the shift equivalence class  of the \emph{index map}, i.e., the induced map on homology
\[
f_*\colon H_*(P_1,P_0;k) \to H_*(P_1,P_0;k).
\]
(See Section~\ref{sec:algebra} for the definition of shift equivalence.)
The index is important because, if $(P_1,P_0)$ and $(P'_1,P'_0)$ are index pairs with the property that the maximal invariant sets under $f$ in $P_1\setminus P_0$ and $P'_1\setminus P'_0$ are the same, then the associated Conley indices are the same.
The converse need not hold.

Computational identification of index pairs is relatively easy \cite{arai:kalies:kokubu:mischaikow:oka:pilarczyk,bush:gameiro:harker:kokubu:mischaikow:obayashi:pilarczyk,bush:cowan:harker:mischaikow}, 
but highly dependent upon the particular approximation used in the computation.
Therefore, in the context of applications, two related challenges appear.
First, to determine whether   $f_*\colon H_*(P_1,P_0; k) \to H_*(P_1,P_0;k)$ and $f_*\colon H_*(P'_1,P'_0;k) \to H_*(P'_1,P'_0;k)$  induce the same or different shift equivalence classes, and second, to  determine the shift equivalence class of $f_*$ with minimal computational effort.
We return to these challenges below.

Recall that $k[t]$ denotes the ring of formal polynomials with coefficients in $k$ (see \cite{Artin}).
The starting point for our analysis is the following observation: a $k$-module $A$ with an endomorphism $\alpha$ may be regarded as a $k[t]$-module, $M=(A,\alpha)$.  
Indeed, given a $k[t]$-module $M$, multiplication by $t$ is an endomorphism of the underlying $k$-module.
Conversely, given an endomorphism $\alpha$ of a $k$-module $A$, we obtain a $k[t]$-module structure on $A$ by letting $t$ act on $x\in A$
by $t\cdot x = \alpha(x)$.


Here is our module-theoretic interpretation of
shift equivalence; the proof is given in 
Section \ref{sec:algebra}.

\begin{proposition}
\label{prop:moduleiso}
Let $\alpha\colon A\to A$ and $\beta\colon B\to B$ be 
endomorphisms of finitely generated abelian groups, and let
$M=(A,\alpha)$ and $N=(B,\beta)$ be 
the associated $\Z[t]$-modules.
Then $\alpha$ and $\beta$ are shift equivalent (denoted by $\alpha \sim_s\beta$) if and only if 
$M\left[t^{-1}\right]\cong N\left[t^{-1}\right]$
as $\Z\left[t, t^{-1}\right]$-modules.
\end{proposition}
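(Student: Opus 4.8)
The plan is to prove the two implications separately. For the forward one ($\alpha\sim_s\beta\Rightarrow M[t^{-1}]\cong N[t^{-1}]$), recall that a shift equivalence consists of homomorphisms $r\colon A\to B$ and $s\colon B\to A$ and an integer $n\ge 1$ with $r\alpha=\beta r$, $s\beta=\alpha s$, $sr=\alpha^n$, and $rs=\beta^n$. The relations $r\alpha=\beta r$ and $s\beta=\alpha s$ say exactly that $r$ and $s$ are $\Z[t]$-linear maps $M\to N$ and $N\to M$; inverting $t$ yields $\Z[t,t^{-1}]$-linear maps $\bar r\colon M[t^{-1}]\to N[t^{-1}]$ and $\bar s\colon N[t^{-1}]\to M[t^{-1}]$, and since $sr=\alpha^n$ is multiplication by $t^n$ on $M$ (and $rs=\beta^n$ is multiplication by $t^n$ on $N$), we have $\bar s\bar r=t^n$ and $\bar r\bar s=t^n$. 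As $t$ is a unit in $\Z[t,t^{-1}]$, the map $t^{-n}\bar s$ is a two-sided inverse of $\bar r$, so $M[t^{-1}]\cong N[t^{-1}]$.

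For the converse, fix an isomorphism $\phi\colon M[t^{-1}]\to N[t^{-1}]$ with inverse $\psi$, and let $\iota_M\colon M\to M[t^{-1}]$ and $\iota_N\colon N\to N[t^{-1}]$ be the localization maps. The first step is to clear denominators. Since $\Z[t]$ is Noetherian and $A$ is finitely generated, $M$ is a finitely presented $\Z[t]$-module, so any $\Z[t]$-linear map $M\to N[t^{-1}]$ has the form $t^{-m}\iota_N r$ for some $m\ge 0$ and some $\Z[t]$-linear $r\colon M\to N$; applied to $\phi\circ\iota_M$ this produces $m$ and $r$ with $\iota_N\circ r=t^m(\phi\circ\iota_M)$, and symmetrically it produces $m'\ge 0$ and a $\Z[t]$-linear $s\colon N\to M$ with $\iota_M\circ s=t^{m'}(\psi\circ\iota_N)$. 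Regarded as maps of abelian groups, $r\colon A\to B$ and $s\colon B\to A$ satisfy $r\alpha=\beta r$ and $s\beta=\alpha s$, and composing (using $\psi\phi=\mathrm{id}$ and $\phi\psi=\mathrm{id}$) gives $\iota_M\circ(sr)=t^{m+m'}\iota_M=\iota_M\circ\alpha^{m+m'}$ and $\iota_N\circ(rs)=\iota_N\circ\beta^{m+m'}$.

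Thus $sr$ and $\alpha^{m+m'}$ agree after composition with $\iota_M$, hence differ by a homomorphism $A\to\ker\iota_M$, and $\ker\iota_M=\bigcup_k\ker\alpha^k$. Since $A$ is a Noetherian $\Z$-module this chain stabilizes, say $\ker\iota_M=\ker\alpha^N$; after enlarging $N$ we may assume $N\ge 1$ and also $\ker\iota_N=\ker\beta^N$. Then $\alpha^N$ kills $\ker\iota_M$, so $\alpha^N\circ sr=\alpha^{N+m+m'}$ on the nose, and symmetrically $\beta^N\circ rs=\beta^{N+m+m'}$. Setting $R:=\beta^N r=r\alpha^N$, $S:=s$ and $n:=N+m+m'\ge 1$, a direct check from $r\alpha=\beta r$ and $s\beta=\alpha s$ yields $R\alpha=\beta R$, $S\beta=\alpha S$, $SR=\alpha^N(sr)=\alpha^n$ and $RS=\beta^N(rs)=\beta^n$, so $\alpha\sim_s\beta$.

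The reverse direction is the substance. The obstacle is that $\iota_M$ and $\iota_N$ need not be injective — their kernels are precisely the $t$-power torsion of $M$ and $N$ — so an isomorphism of the localizations does not directly restrict to maps between $A$ and $B$. This is overcome by invoking finite generation twice: first to bound denominators and extract genuine $\Z[t]$-linear maps $r$ and $s$, and then (stabilization of the kernel chains) to absorb the residual $t$-torsion discrepancy between $sr$ and $\alpha^{m+m'}$ by premultiplying by a sufficiently high power of $\alpha$. The only delicate point I anticipate is the exponent bookkeeping and the verification that $R$ and $S$ satisfy all four shift-equivalence relations exactly.
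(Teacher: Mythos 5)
Your proposal is correct, and its forward direction is identical to the paper's. The converse also follows the same basic strategy (use finite generation to clear denominators and produce genuine maps $r\colon A\to B$, $s\colon B\to A$), but you execute it with more care at exactly the point the paper glosses over: the paper defines $r(x)=t^d f(x)$ on generators and concludes $r(s(x))=t^{d+e}x$ outright, which tacitly identifies $M$ and $N$ with their images in $M[t^{-1}]$ and $N[t^{-1}]$; that identification fails when $\alpha$ or $\beta$ has $t$-power torsion (the proposition is stated for arbitrary finitely generated abelian groups, and the reduction killing $M_{\mathrm{nil}}$ is only carried out later, in Lemma~\ref{lem:no-t}, and there only in the torsionfree case). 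You handle this in two ways: first, you invoke finite presentation of $M$ over the Noetherian ring $\Z[t]$ (equivalently, $\Hom_{\Z[t]}(M,N)[t^{-1}]\cong\Hom_{\Z[t,t^{-1}]}(M[t^{-1}],N[t^{-1}])$) to get well-defined lifts $r$, $s$ with $\iota_N r=t^m\phi\,\iota_M$ and $\iota_M s=t^{m'}\psi\,\iota_N$, rather than lifting along a possibly non-injective localization map; second, you absorb the resulting discrepancy $sr-\alpha^{m+m'}$, which lands in $\ker\iota_M=\bigcup_k\ker\alpha^k$, by stabilizing the kernel chain and premultiplying by $\alpha^N$ (and $\beta^N$), so that all four shift-equivalence identities hold exactly with the common exponent $n=N+m+m'$; your checks $R\alpha=\beta R$, $SR=\alpha^N(sr)=\alpha^n$, $RS=\beta^N(rs)=\beta^n$ are correct since $sr$ and $rs$ are $\Z[t]$-linear and hence commute with $\alpha$, $\beta$. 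What this buys is a proof valid verbatim without any injectivity or torsionfreeness hypothesis, at the cost of the extra Noetherian-stabilization step; the only blemish is notational, namely reusing the letter $N$ both for the module $(B,\beta)$ and for the stabilization exponent, which you should rename.
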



As a consequence, the issue of whether two index pairs 
have the same homological Conley index is decidable,
because it reduces to determining
whether $\Z[t,t^{-1}]$--modules are isomorphic; see \cite{BL}.

Returning to the  challenge of determining the shift equivalence class, it is often  computationally efficient to first compute
$f_*\colon H_*(P_1,P_0;k) \to H_*(P_1,P_0;k)$ when $k$ is a field.
In this case it is well known that
shift equivalence is completely determined by the rational canonical form of $f_*$, excluding nilpotent blocks, i.e., blocks with eigenvalue $t=0$.  
See \cite[7.3--7.5]{Lind}, for example.
An efficient rational canonical form algorithm is due to Storjohann \cite{storjohann} and implemented for Conley index computations in 
\cite{bush:cowan:harker:mischaikow}.

However, essential information can be lost if one considers shift equivalence over fields.

\begin{example}
\label{ex:1}
Consider two invariant sets  for  a one-dimensional 
map $f\colon \R\to \R$.
Let the first invariant set  consist of two unstable hyperbolic fixed points $\setof{x_0,x_1}$, e.g., 
$f(x_k)=x_k$ and $f'(x_k) = (-1)^k2$ for $k=1,2$.
Let the second invariant set consist of an unstable
orientation-preserving period two orbit $\setof{y_0,y_1}$ where  $f(y_0) = y_1$, $f(y_1) = y_0$, and $(f^2)' (y_k) = 2$.
Using the simplest possible index pairs 
(see \cite{mischaikow:mrozek}), the  
associated index maps on $H_1$ are
\begin{equation}
\label{eq:2Matrices}
    \begin{pmatrix}
    1 & 0 \\
    0 & -1
\end{pmatrix}
\quad\text{and}\quad
\begin{pmatrix}
    0 & 1 \\
    1 & 0
\end{pmatrix},
\end{equation}
respectively.
Since the eigenvalues for 
both these matrices 
are $\pm 1$, they are shift equivalent over any field.
However, a simple calculation  shows that 
they are not shift equivalent over $\Z$ 
(see Lemma~\ref{lem:invertible} and
Example \ref{ex:permute}
for a more general analysis).
This example shows that the Conley index can distinguish
between an invariant set consisting of two fixed points and a period two orbit -- clearly a result of interest in dynamical systems -- but at the cost of using integer coefficients.
\end{example}

This raises two questions:  how much information concerning the Conley index is lost by computing with field coefficients, and how difficult is it to compute with integer coefficients?
Complete answers to both questions appear to be extremely technical, and beyond the needs of current applications.
Thus, the focus of this paper is on providing the reader with hopefully useful insights on how integer computations could be done, and a sense of the algebraic challenges that need to be addressed to perform these computations. 

To perhaps further whet the reader's appetite, consider Example~\ref{ex:1} again. 
In Section~\ref{sec:invertible} we will show that every $2\times 2$ matrix with eigenvalues $\pm 1$ is shift equivalent to either $\left(\mycom10\mycom0{-1}\right)$ or $\left(\mycom01\mycom10\right)$, but not both.
Here is the significance of this result. Suppose that the induced map on homology of the index map is identified as having characteristic polynomial $x^n(x^2 -1)$. 
In this case, \eqref{eq:2Matrices} provides a complete identification of the homology Conley indices.
Unfortunately, as is made clear in this paper, this kind of  identification is difficult in general.

Here is an outline of this paper.
Section \ref{sec:algebra} provides a brief discussion the Conley index and explains why, 
for computational reasons, we restrict 
our attention to the homological Conley index.
The problem of shift equivalence is then 
translated into the realm of commutative ring theory 
and the proof of Proposition~\ref{prop:moduleiso} is presented.
In Section~\ref{sec:invertible} we provide an
elementary result in the setting that the 
endomorphisms are invertible, and apply it to the matrix algebra associated with Example~\ref{ex:1}.

The complexity of this result motivates our focus on the case $M=(\Z^2,T)$.
We use the form of the characteristic polynomial $\chi(t)=\det(t\cdot I-T)$ to organize our presentation.
In Section~\ref{sec:Z} we consider the case where $\chi(t)$ factors into linear terms.
If $\chi(t)$ is irreducible, then the problem of shift equivalence breaks up into two additional cases: $\Z[t]/(\chi)$ is a Dedekind domain, which is dealt with in Section~\ref{sec:Q}, and $\Z[t]/(\chi)$ is not a Dedekind domain, which is addressed in Section~\ref{sec:singular}.
In each of these sections, we provide a fundamental algebraic technique for identifying classes of shift equivalence, examples of how this technique can be employed, and a brief remark highlighting the technical difficulty of considering higher dimensional cases, i.e., $M=(\Z^n,T)$.

We conclude in Section \ref{sec:Z/pn} with a brief discussion of shift equivalence in the setting of finite abelian groups.

\section{Translation into Algebra} \label{sec:algebra} 

The goal of this section is the proof of 
Proposition~\ref{prop:moduleiso}, which states that 
the problem of identifying the shift equivalence class 
of a $\Z[t]$-module $M$ (represented by
an endomorphism $\alpha$ of the 
underlying abelian group) is equivalent to identifying the isomorphism class 
of the related module $M[t^{-1}]$.
We begin by reviewing the necessary concepts and notation.

\begin{definition}
\label{defn:shift}
In any fixed category,
endomorphisms $\alpha\colon A\to A$ and 
$\beta\colon B\to B$  are 
\emph{shift equivalent}, written 
$\alpha \sim_s \beta$, if there 
exist morphisms $r\colon A\to B$,  
$s\colon B\to A$, and a positive integer $m\in\Z^+$ such that
\begin{equation}
(i)\ r\circ \alpha = \beta\circ r,
\qquad
(ii)\
s\circ \beta = \alpha \circ s,
\qquad
(iii)\
s\circ r = \alpha^m,
\qquad \text{and} \qquad (iv)\ r\circ s = \beta^m.
\end{equation}
\end{definition}

\begin{example}\label{SE-matrix}
In the category of free $k$-modules, such as
vector spaces over a field $k$, endomorphisms
are represented by square matrices.
Square matrices $T_1$ and $T_2$ are 
shift equivalent if there are 
matrices $R$ and $S$ over $k$ such that 
$RT_1=T_2R$, $ST_2=T_1S$, $SR=T_1^m$ and $RS=T_2^m$.

It is well known that shift equivalence 
over a field $k$, such as $\Q$, is 
completely determined by the 
rational canonical form of $T$, excluding nilpotent 
blocks,  i.e., blocks with eigenvalue $t=0$. 
In particular, the nonzero eigenvalues are an invariant;
see \cite[7.3--7.5]{Lind}. 
This reflects the fact that finite-dimensional $k[T]$-modules are classified by
their rational canonical forms. Thus if $(M_1,T_1)$ and $(M_2,T_2)$
are shift equivalent over $k$, their characteristic polynomials
$\chi(t)=\det(t\cdot I-T_i)$ differ only by powers of $t$ 
and the $T_i$ have the same rational canonical form. 
An efficient rational canonical form algorithm is due to Storjohann \cite{storjohann}.
\end{example}

\paragraph{Homotopy theory}
The combined work of \cite{szymczak, franks:richeson} shows that the most 
general form of the Conley index is shift equivalence in the homotopy category of 
maps on pointed topological spaces. 
This implies that shift equivalence
of homotopy groups 
(in the category of groups)
is an invariant of the Conley index.
Thus, in this general setting the issue of whether two index pairs have the same Conley index requires the ability to decide if two finitely generated groups are isomorphic.
This is known to be impossible; see \cite[7.10]{Artin}. 
Therefore, from  the perspective of applications, working on the level of the homotopy Conley index is not a natural starting point.
With this in mind, we focus on the homological Conley index. 
Consequently, we are interested in shift equivalence in the category of finitely generated abelian groups.

Stated more explicitly, let $(P_1,P_0)$ and $(Q_1,Q_0)$ be index pairs for continuous maps $f$ and $g$ (it is possible that $f=g$).
We are interested in understanding whether $f_*\colon H_*(P_1,P_0;k)\to H_*(P_1,P_0;k)$ and $g_*\colon H_*(Q_1,Q_0;k)\to H_*(Q_1,Q_0;k)$ are shift equivalent or not. 
We leave it to the reader to check that 
$f_*$ 
and $g_*$ 
are shift equivalent if and only if $f_n\colon H_n(P_1,P_0;k)\to H_n(P_1,P_0;k)$ and $g_n\colon H_n(Q_1,Q_0;k)\to H_n(Q_1,Q_0;k)$ are shift equivalent for each $n$.

We finish our discussion of the Conley index by citing a result of J. Bush \cite[Corollary 4.7]{bush} that every $n\times n$ matrix $T$ with integer entries can be realized as a representative of a Conley index.
More precisely, given $T$ there exists a one-dimensional continuous function $f$ and an index pair $(P_1,P_0)$ such that $T$ is shift equivalent over $\Z$ to $f_1\colon H_1(P_1,P_0)\to H_1(P_1,P_0)$.

Turning to the algebraic formulation of shift equivalence, recall
\cite{AM} that the \emph{localization} $M[t^{-1}]$ of a $k[t]$-module $M$ is the set of equivalence classes of formal fractions $x/t^i$, where $x\in M$, $i\ge0$, and $x/t^i\equiv y/t^j$ if and only if $t^{j+m}x=t^{i+m}y$ for some $m>0$.

\begin{proof}[Proof of Proposition \ref{prop:moduleiso}]
Assume $\alpha \sim_s\beta$,
and let $r:A\to B$, $s:B\map{}A$ and $m$
be as in Definition \ref{defn:shift}.
Then $r$ is a $k[t]$-module homomorphism
from $M=(A,\alpha)$ to $N=(B,\beta)$,
because for all $x\in A$:
\[
r(t\cdot x) = r(\alpha(x)) = \beta(r(x)) = t\cdot r(x).
\]
The same argument shows that $s$ is a $k[t]$-module homomorphism.
The conditions that $sr=t^m$ 
and $rs=t^m$ translate into
$sr(x) = \alpha^m(s(x)) = t^m\cdot x$ and
$rs(y) = \beta^m(r(y)) = t^m\cdot y$.
Passing to $M[t^{-1}]$
and $N[t^{-1}]$, this is
equivalent to $t^{-m}\cdot s(r(x)) = x$
and $r\cdot(t^{-m}\cdot s) y=y$. Therefore
$t^{-m}\cdot s$ is an inverse of $r$ and
$t^{-m}\cdot r$ is an inverse of $s$.
Thus $M[t^{-1}]\cong N[t^{-1}]$.

Now assume that there exists a 
$k\!\left[t, t^{-1}\right]$--module isomorphism 
$f\colon M\left[ t^{-1}\right] \to 
N\left[ t^{-1}\right]$.
Because $M$ is finitely generated,
say by $x_1,...,x_n$, there are $d_i>0$
and $y_i\in N$ such that $f(x_i)=y_i/t^{d_i}$.
Let $d = \max \setof{d_1,\ldots, d_n}$.
Set $r(x) = t^d f(x)$ and observe that $r:M\to N$ is a group homomorphism and $f(x)=r(x)/t^d$. 
Similarly, the isomorphism $f^{-1}:N\left[t^{-1}\right] \to 
M\left[t^{-1}\right]$ has the form $f^{-1}(y)=s(x)/t^e$ for some $e>0$. 
Then for all $x\in M$ we have $x=f^{-1}f(x) = r(s(x))/t^{d+e}$, i.e., 
$r(s(x))=t^{d+e}x$; similarly we have $s(r(y))=t^{d+e}y$ for all $y\in N$.
Thus $\alpha$ and $\beta$ are shift equivalent.
\end{proof}

\begin{remark}
The {\it Bowen--Franks group} of $M$ is
$M/(1-t)M$; see \cite[7.4.15]{Lind}.
Since this is a quotient of $M[t^{-1}]$,
this invariant is weaker than the 
invariant we consider.
\end{remark}

As we pointed out in the introduction, 
most Conley index computations are done with $k$ chosen to be a field 
using rational canonical forms,
for the sake  of computational efficacy (see \cite[Section 14.8]{Artin}). 
Even though the worst bounds on computational complex of 
homology computations with integer coefficients 
are worse than that of fields, computations over the integers are possible.
Thus for the remainder of this paper we assume that  $k\cong \Z$, and $M$ is a $\Z[t]$-module, finitely generated 
as an abelian group, with $t$ acting as an endomorphism of the underlying abelian group.

\begin{remark}
For the sake of simplicity, 
we will talk about the shift 
equivalence class of a $\Z[t]$-module
$M$, meaning the shift equivalence class
of the map $M\to M$, $m\mapsto tm$. 
We will say that
a $\Z[t]$-module $M$ is finitely generated 
if it is finitely generated as an abelian group; 
and that $M$ is torsionfree if it is
torsionfree as an abelian group.
\end{remark}

We focus first on $\Z[t]$-modules
$M$ which are finitely generated and torsionfree as abelian groups.
That is, the underlying abelian group is $\Z^m$ and $t$ acts by an 
$m\times m$ integer matrix $T$. 
As in Example \ref{SE-matrix},
the characteristic polynomial $\chi_M(t)=\det(t\cdot I-T)$
is an invariant in $\Z[t]$ 
up to powers of $t$.
The following result allows us to assume that a
torsionfree $\Z[t]$-module $M$ has no $t$-torsion.

Set $M_{\mathrm{nil}}=\{x\in M: t^nx=0, n\gg0\}$.
Then $M/M_{\mathrm{nil}}$ is also a $\Z[t]$-module.

\begin{lemma}\label{lem:no-t} 
If $M$ is a $\Z[t]$-module,
finitely generated and
torsionfree as an abelian group,
then $M/M_{\mathrm{nil}}$ is torsionfree
and $M[t^{-1}]\smap{\cong}
M/M_{\mathrm{nil}}[t^{-1}]$.
\end{lemma}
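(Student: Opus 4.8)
The plan is to prove the two assertions separately: torsionfreeness of $M/M_{\mathrm{nil}}$ by a direct torsion-chase, and the localization statement by reducing to the exactness of localization once we observe that $M_{\mathrm{nil}}$ becomes zero after inverting $t$.

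First I would record that $M_{\mathrm{nil}}$ is a $\Z[t]$-submodule of $M$: it is closed under addition (if $t^ax=0$ and $t^by=0$ then $t^{\max(a,b)}(x+y)=0$) and under multiplication by any $p(t)\in\Z[t]$ (since $t^a\cdot p(t)x=p(t)\cdot t^ax=0$). Next, to see that $M/M_{\mathrm{nil}}$ is torsionfree as an abelian group, suppose $n\bar x=0$ in $M/M_{\mathrm{nil}}$ for some integer $n\ge 1$; then $nx\in M_{\mathrm{nil}}$, so $t^k(nx)=0$ for some $k\ge 0$, i.e.\ $n\cdot(t^kx)=0$ in $M$. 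Since $M$ is torsionfree as an abelian group, $t^kx=0$, hence $x\in M_{\mathrm{nil}}$ and $\bar x=0$. This proves the first claim, and it is here (and only here) that the torsionfreeness hypothesis on $M$ is used.

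For the isomorphism $M[t^{-1}]\smap{\cong}M/M_{\mathrm{nil}}[t^{-1}]$, the key observation is that $M_{\mathrm{nil}}[t^{-1}]=0$. Because $M$ is finitely generated as an abelian group, so is its subgroup $M_{\mathrm{nil}}$; choosing generators and an exponent $N$ larger than all the powers of $t$ that annihilate them, we get $t^NM_{\mathrm{nil}}=0$. Consequently every formal fraction $x/t^i$ with $x\in M_{\mathrm{nil}}$ is zero in $M_{\mathrm{nil}}[t^{-1}]$, since $t^Nx=0$ witnesses $x/t^i\equiv 0$. Now I would invoke the exactness of localization at the multiplicative set $\setof{1,t,t^2,\dots}$ (see \cite{AM}): applying it to the short exact sequence $0\to M_{\mathrm{nil}}\to M\to M/M_{\mathrm{nil}}\to 0$ of $\Z[t]$-modules yields an exact sequence $0\to M_{\mathrm{nil}}[t^{-1}]\to M[t^{-1}]\to M/M_{\mathrm{nil}}[t^{-1}]\to 0$, and since the left-hand term vanishes, the natural map $M[t^{-1}]\to M/M_{\mathrm{nil}}[t^{-1}]$ is an isomorphism.

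There is essentially no serious obstacle here; the argument is bookkeeping. The one point that should not be glossed over is the finite generation of $M_{\mathrm{nil}}$ as an abelian group, which is what delivers a single uniform annihilating power $t^N$ and hence the vanishing of $M_{\mathrm{nil}}[t^{-1}]$; without the standing hypothesis that $M$ is finitely generated this could fail. If one prefers to avoid citing exactness of localization, the same isomorphism can be checked by hand: surjectivity is immediate since $y/t^j$ is the image of $y/t^j$, and injectivity follows because if $x/t^i$ maps to $0$ then $t^mx\in M_{\mathrm{nil}}$ for some $m$, so $t^{m+k}x=0$ for some $k$, whence $x/t^i=0$ in $M[t^{-1}]$.
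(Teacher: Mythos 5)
Your proof is correct. The torsionfreeness argument is the same one the paper gives (clear $t$-powers past the integer and use torsionfreeness of $M$), and both proofs turn on the same key fact that you rightly flag: finite generation of $M_{\mathrm{nil}}$ yields a single power $t^N$ with $t^N M_{\mathrm{nil}}=0$. Where you diverge is in how that fact is deployed. You prove the localization statement directly: $M_{\mathrm{nil}}[t^{-1}]=0$, so exactness of localization applied to $0\to M_{\mathrm{nil}}\to M\to M/M_{\mathrm{nil}}\to 0$ makes the natural map $M[t^{-1}]\to (M/M_{\mathrm{nil}})[t^{-1}]$ an isomorphism (and your by-hand injectivity/surjectivity check is a fine substitute for the citation). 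The paper instead uses $t^N M_{\mathrm{nil}}=0$ to factor multiplication by $t^N$ on $M$ through the quotient map $q\colon M\to M/M_{\mathrm{nil}}$, producing $S\colon M/M_{\mathrm{nil}}\to M$ with $S\circ q=t^N$ and $q\circ S=t^N$; this exhibits an explicit shift equivalence between $(M,t)$ and $(M/M_{\mathrm{nil}},t)$, and the localization isomorphism then follows from the already-proved direction of Proposition~\ref{prop:moduleiso}. Your route has the advantage of proving the lemma as literally stated, with a canonical isomorphism and no appeal to Proposition~\ref{prop:moduleiso}; the paper's route hands you the explicit shift-equivalence data $(q,S)$, which is exactly what the subsequent remark records, at the cost of routing through the proposition. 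Either argument is complete.
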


\begin{remark}
Proposition \ref{prop:moduleiso} and
Lemma \ref{lem:no-t} imply that
$(M,t)$ is shift equivalent to
$(M/M_{\mathrm{nil}},t)$.
\end{remark}

\begin{proof}
If $x\in M$ and  there exists $a\in\Z$ such that 
$ax\in M_{\mathrm{nil}}$, 
then $t^n(ax)=a(t^nx)=0$.
Since $M$ is torsionfree this implies that $t^nx=0$ 
and hence $x\in M_{\mathrm{nil}}$. 
This implies that if 
$x\in M/M_{\mathrm{nil}}$ then $ax\ne0$
for all $a\ne0$, i.e.,
$M/M_{\mathrm{nil}}$ is
torsionfree as an abelian group.

Finally, since
$M_{\mathrm{nil}}$ 
is finitely generated there is an $m$
such that $t^m\cdot M_{\mathrm{nil}}=0$, and hence
the map $s:M\to M$, $s(x)=t^mx$,
factors through a map $S:M/M_{\mathrm{nil}}\to M$
with $S\circ q=t^m$, where $q$ is the quotient map
$q:M\to M/M_{\mathrm{nil}}$.
(See \cite[14.1.6]{Artin}.)
Thus $q$ and $S$ form a shift equivalence
between $M$ and $M/M_{\mathrm{nil}}$.
\end{proof}

\begin{remark}\label{det(T)ne0}
$M_{\mathrm{nil}}$ is zero if and only if the
determinant of the associated matrix $T$ is nonzero. 
\end{remark}

\begin{remark}
As with any finitely generated $\Z[t]$-module, 
$M$ has associated prime ideals $\wp_i$ in 
$\Z[t]$ and submodules $Q_i$ of $M$ such that
$0=Q_0\cap\cdots\cap Q_n$.
See \cite[4.20--22]{AM}. 
In this primary decomposition,
the $Q_i$ are associated to $\wp_i$ in the sense that
\[
\wp_i = \{f\in \Z[t]:f^n\cdot M\subset Q_i
\text{ for }n\gg0\}.
\]
\end{remark}

\begin{definition}\label{def:I}
Let $M$ be a $\Z[t]$-module whose underlying abelian group is $\Z^n$.
Throughout this paper we set $R=\Z[t]/I$, where
 $I$ is the ideal 
$\{f\in\Z[t]\colon 
f(x)=0\textrm{ on } M\}$
of $\Z[t]$.
\end{definition}

\begin{lemma}
Let $M$ and $I$ be as in Definition \ref{def:I}.
Then, $M$ is an $R$-module, 
and $I$ is a principal ideal
of $\Z[t]$, generated by a monic polynomial.
\end{lemma}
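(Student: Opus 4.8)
The plan is to prove two things: that $M$ is naturally an $R$-module, and that $I$ is principal, generated by a monic polynomial. The first assertion is essentially a tautology once we have set up the definitions: by construction $I$ annihilates $M$, so the $\Z[t]$-module structure on $M$ descends to an action of the quotient ring $R=\Z[t]/I$. I would spell this out in one sentence, noting that $I$ is an ideal because the set of polynomials annihilating $M$ is closed under addition and under multiplication by arbitrary elements of $\Z[t]$.

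The substance is the claim that $I$ is principal with a monic generator. Here I would argue as follows. The underlying abelian group of $M$ is $\Z^n$, and $t$ acts by an integer matrix $T$. The characteristic polynomial $\chi(t)=\det(t\cdot I-T)$ is monic of degree $n$ with integer coefficients, and by the Cayley--Hamilton theorem $\chi(T)=0$ as an endomorphism of $\Z^n$; hence $\chi\in I$. Now I would use division with remainder: since $\chi$ is monic, any $f\in\Z[t]$ can be written $f=q\chi+r$ with $q,r\in\Z[t]$ and $\deg r<n$ (this works over $\Z$ precisely because the leading coefficient of $\chi$ is a unit). Consequently $I$ is generated, as a $\Z[t]$-module, by $\chi$ together with the set $I\cap(\Z\oplus\Z t\oplus\cdots\oplus\Z t^{n-1})$ of low-degree elements of $I$. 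I would then invoke the structure theory of $\Z[t]/(\chi)$ as a free $\Z$-module of rank $n$: under the isomorphism $\Z[t]/(\chi)\cong\Z^n$ sending $t^i\mapsto e_i$, the image of $I$ is a $\Z$-submodule $\bar I$ of $\Z^n$, and $I/(\chi)$ is a $\Z[t]/(\chi)$-submodule, i.e.\ an ideal of $R$; but more to the point, $I$ is the full preimage of $\bar I$ under $\Z[t]\to\Z[t]/(\chi)$.

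To get principality I would show $\bar I$ is a \emph{cyclic} $R$-module, equivalently that $I/(\chi)$ is a principal ideal of $R$. The cleanest route: the kernel of the ring surjection $R=\Z[t]/(\chi)\to \End_\Z(\Z^n)$-image, call it $\Z[T]$, is exactly $I/(\chi)$, so $R/(I/(\chi))\cong\Z[T]$, which is a subring of $M_n(\Z)$ and hence a torsionfree abelian group of some rank $d\le n$. I would then exhibit a single generator. Alternatively — and I think more robustly — observe that $\Z[t]$ is not a PID, so principality of $I$ is not automatic and genuinely uses the hypothesis that $M$ is torsionfree of finite rank over $\Z$: the point is that $I\supseteq(\chi)$ with $\chi$ monic, and one checks directly that the minimal polynomial $\mu(t)$ of $T$ over $\Q$, suitably normalized to be the monic generator of the ideal $(\chi)\colon\!(I)$... here I would instead argue that $\Z[t]/I\cong\Z[T]$ is torsionfree as an abelian group (being a subring of $M_n(\Z)$), and a monic polynomial $g$ generating $I$ is obtained as the unique monic generator of the kernel of $\Q[t]\to\Q[T]$ intersected with $\Z[t]$; torsionfreeness of $\Z^n$ forces this intersection to be principal. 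The monic polynomial we want is the minimal polynomial $\mu_T(t)$ of $T$: it is monic with integer coefficients since $T$ is an integer matrix and $\mu_T$ divides $\chi_T$ in $\Q[t]$ with $\chi_T$ monic (so by Gauss's lemma $\mu_T\in\Z[t]$), it lies in $I$, and conversely any $f\in I$ annihilates $T$ hence is divisible by $\mu_T$ in $\Q[t]$, and again by Gauss (or by the division argument, using that $\mu_T$ is monic) $f\in\mu_T\Z[t]$. Thus $I=(\mu_T)$.

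The main obstacle is the last step: showing $I$ is generated \emph{as an ideal of $\Z[t]$} by the single monic polynomial $\mu_T$, rather than merely containing it. Over a field this is immediate from the Euclidean algorithm, but over $\Z$ one must be careful that no ``extra'' low-degree integer relations sneak into $I$ beyond multiples of $\mu_T$. The resolution is that $\mu_T$ is monic, so division with remainder is valid in $\Z[t]$ (no denominators introduced), and the torsionfreeness of $M=\Z^n$ guarantees that if $f\in I$ and $f=q\mu_T+r$ with $\deg r<\deg\mu_T$, then $r$ annihilates $T$ as an integer-matrix identity, forcing $r=0$ by minimality of $\mu_T$ over $\Q$. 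I would present this division-with-remainder argument as the crux, with Cayley--Hamilton and Gauss's lemma supplying $\mu_T\in\Z[t]$ and hence well-definedness of the whole argument.
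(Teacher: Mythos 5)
Your proposal is correct and follows essentially the same route as the paper: the generator is the minimal polynomial of $t$ acting on $M\cong\Z^n$, Cayley--Hamilton puts $\chi$ in $I$, and a Gauss's-lemma/content argument shows the minimal polynomial is monic in $\Z[t]$, from which $I=(\mu_T)$ follows. Your explicit division-with-remainder step (valid because $\mu_T$ is monic) just spells out the divisibility conclusion that the paper leaves implicit; aside from the exploratory detour in your middle paragraph, the argument matches the paper's proof.
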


\begin{proof}
We adopt standard terminology; see  \cite{Artin, AM}.
Since $I\cap\Z=0$ and $I$ contains the monic polynomial $\chi(t)$,
every associated prime of $M$ is generated by a monic polynomial.
Since $\Z[t]$ is a unique factorization domain, 
we can factor $\chi(t)$ as a product of irreducible
polynomials, and these must be monic. 
Let $h(t)$ denote the
minimal polynomial in $\Q[t]$
of $t$ acting on $M$. Then
$h(t)$ is monic and divides $\chi(t)$; 
clearing fractions, we may assume $h$ is a primitive polynomial in $\Z[t]$,
i.e., its coefficients are relatively prime integers.
Since $h$ divides $\chi(t)$ in $\Q[t]$, 
there is a $g(t)$ in $\Z[t]$ and a constant $c$ such that 
$h(t)g(t)=c\chi(t)$;
$c$ must be the 
greatest common divisor 
of its coefficients, i.e.,
the content of $g$.
Replacing $g$ by $g/c$ we have $hg=\chi$.
This implies that $h(t)$ is a product of monics and hence is monic in $\Z[t]$.
\end{proof}

Conversely, any $R$-module may be 
considered as a $\Z[t]$-module by letting $f\in\Z[t]$
act as its image in $R=\Z[t]/I$ acts; this change from
$R$-modules to $\Z[t]$-modules is called 
{\it restriction of scalars} \cite{AM}.
Thus $R$-modules $M$ and $N$ 
are shift equivalent if and only if 
$M[t^{-1}]\cong N[t^{-1}]$ 
as $R[t^{-1}]$-modules.

\begin{remark}
When $\chi(t)$ is an irreducible 
polynomial $f$ of degree~2, $M$ is a 
module over the 1--dimensional domain 
$R=\Z[t]/I$, and the field of
fractions of $R$ is a
number field. 
This case is discussed in Sections~\ref{sec:singular} and \ref{sec:Q}.
\end{remark}

\goodbreak
 \section{Invertible matrices}
 \label{sec:invertible}

It is well known that conjugate matrices 
are shift equivalent: $T$ is shift equivalent 
to $RTR^{-1}$ via $R$ and $S=R^{-1}$. 
Here is a partial converse. Recall that a matrix
$T$ over the integers is invertible if and
only if $\det(T)=\pm1$. 
 
\begin{lemma}\label{lem:invertible}
Suppose that $T_1$ is shift equivalent to $T_2$ (via $R$ and $S$).
If  $T_1$ is invertible and  $det(T_2)\ne0$, then $T_2$, $R$ and $S$ are invertible and $T_2=RT_1R^{-1}$. 
\end{lemma}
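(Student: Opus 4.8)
The plan is to extract everything from the four defining relations $RT_1=T_2R$, $ST_2=T_1S$, $SR=T_1^m$, $RS=T_2^m$, using the fact that an integer square matrix is invertible exactly when its determinant is $\pm1$. The first task is to see that $T_1$ and $T_2$ have the same size. Write $T_1$ for an $n_1\times n_1$ matrix and $T_2$ for an $n_2\times n_2$ matrix, so that $R$ is $n_2\times n_1$ and $S$ is $n_1\times n_2$. Regarding all of these as rational matrices, $\operatorname{im}(SR)\subseteq\operatorname{im}(S)$ forces $\operatorname{rank}(SR)\le n_2$; but $SR=T_1^m$ is invertible, so $\operatorname{rank}(SR)=n_1$, and thus $n_1\le n_2$. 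Symmetrically $\operatorname{rank}(RS)\le n_1$, while $RS=T_2^m$ has rank $n_2$ since $\det T_2\ne0$, giving $n_2\le n_1$. Hence $n_1=n_2=:n$ and $R,S$ are square $n\times n$ matrices. This is the only place the hypothesis $\det T_2\ne0$ enters; for shift equivalences of matrices of a fixed size the step is vacuous.

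With $R$ and $S$ now square of the same size, I would run the determinant count: $\det T_1=\pm1$ since $T_1$ is invertible, so $\det(S)\det(R)=\det(SR)=\det(T_1)^m=\pm1$ in $\Z$. As $\det R$ and $\det S$ are integers, each equals $\pm1$, so $R,S\in GL_n(\Z)$. Finally, multiplying $RT_1=T_2R$ on the right by $R^{-1}$ gives $T_2=RT_1R^{-1}$; in particular $\det T_2=\det T_1=\pm1$, so $T_2$ is invertible, as claimed.

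I do not expect a genuine obstacle. The only subtlety is the size count in the first paragraph, which is precisely where $\det T_2\ne0$ is used: without it, $T_2$ could a priori be a strictly larger matrix with nontrivial kernel, in which case $R$ would not be square and the asserted conjugacy $T_2=RT_1R^{-1}$ would not even typecheck. Everything else is routine linear algebra over $\Z$, and the conclusion recovers the familiar fact that an invertible matrix is shift equivalent only to its conjugates.
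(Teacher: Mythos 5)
Your proof is correct, and it takes a different route from the paper's. The paper argues module-theoretically: from $SR=T_1^m$ invertible it deduces that $R$ is a (split) injection and $S$ a surjection, so $B\cong R(A)\oplus\ker(S)$; then $\det(T_2)\ne0$ together with $RS=T_2^m$ kills $\ker(S)$, so $R$ is an isomorphism of abelian groups, $S=T_1^mR^{-1}$, and $T_2=RT_1R^{-1}$. You instead work with matrices over $\Q$ and $\Z$: a rank count ($\operatorname{rank}(SR)=n_1\le n_2$ and $\operatorname{rank}(RS)=n_2\le n_1$) forces $n_1=n_2$, and then $\det(S)\det(R)=\det(T_1)^m=\pm1$ in $\Z$ gives $R,S\in GL_n(\Z)$, after which the conjugacy is immediate. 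Your use of $\det(T_2)\ne0$ (to bound $n_2\le n_1$) plays exactly the role that the paper's $\ker(S)=0$ step plays; it is genuinely needed, since e.g.\ $T_1\oplus 0$ is shift equivalent to $T_1$ with non-square $R$. What your version buys is explicitness: it makes visible that the two matrices must have the same size and that for equal-size matrices the hypothesis $\det(T_2)\ne0$ is automatic once $T_1$ is invertible. What the paper's version buys is coordinate-freeness: the splitting argument does not mention determinants and so adapts more directly to endomorphisms of finitely generated abelian groups with torsion, which is the ambient setting of the paper.
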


\begin{proof}
The axiom that $SR=T_1^m$ 
implies that 
$R:A\to B$ is an injection and $S:B\to A$ is a surjection.
Therefore, $B\cong R(A)\oplus \ker(S)$ (see \cite[Theorem IV.1.18]{hungerford}).
Because $\det(T_2)\ne0$, the axiom that $RS=T_2^m$ implies that $\ker(S)=0$.
Hence $R$ and $S$ are invertible and $S=T_1^mR^{-1}$.
The axiom that $RT_1=T_2R$ implies
that $T_2=RT_1R^{-1}$.
\end{proof}

We now present a sequence of examples that are consequences of Lemma~\ref{lem:invertible}; they are indicative of the types of results obtained in the more challenging settings discussed in the sections that follow.

The only simple general result that we are aware of is that the $n\times n$ matrices $\pm I_n$ are not shift equivalent to any other $n\times n$ matrix because they are in the center of $GL_n(\Z)$. (This follows from 
Lemma \ref{lem:invertible}.)

\begin{example}\label{ex:permute}
Returning to Example~\ref{ex:1}, we claim that $P=\left(\mycom0{1}\mycom{1}0\right)$ is shift equivalent to the matrix $\left(\mycom1x\mycom0{-1}\right)$ if and only if $x$ is odd.
To see this, conjugate $P$ with $R=\left(\mycom ac\mycom bd\right)$ (where $\det(R) =  \pm 1$) to get
\[
\begin{bmatrix}
bd-ac & a^2-b^2 \\
d^2-c^2 & ac -bd 
\end{bmatrix}
=
\begin{bmatrix}
1 & 0 \\
x & -1 
\end{bmatrix}.
\]
Solving gives $a=\pm b$,
$a(c \pm d)=\pm1$, $a=\pm1$ and
$x = 1 \pm 2c$, 
so $x$ is odd.
$P$ is also shift equivalent
to $-P = \left(\mycom0{-1}\mycom{-1}0\right)$ 
via $R=\left(\mycom{-1}0\mycom0{1}\right)$.
\goodbreak

A similar argument shows that 
$Q=\left(\mycom10\mycom0{-1}\right)$
is shift equivalent to the matrix 
$\left(\mycom1c\mycom0{-1}\right)$ if and only if $c$ is even,
and that $Q\sim_s -Q$.
\end{example}

\begin{example}\label{ex:rotate}
Suppose that $\chi(t)=t^2+1$. 
Then the rotation matrix
$T=\left(\mycom0{-1}\mycom{1}0\right)$
is shift equivalent to every matrix
of the form
$\left(\mycom c{1+c^2}\mycom{-1}{-c}\right)$, via
$R=\left(\mycom 1{-c}\mycom 0{-1}\right)$. 
In particular, $T\sim_s -T$.
Similarly, $T$ is shift equivalent to
every matrix of the form
$\left(\mycom c{1-c^2}\mycom{+1}{-c}\right)$
and $\left(\mycom{c}{-1} \mycom{1+c^2}{-c}\right)$.
\end{example}

\begin{proposition}
\label{prop:2perm}
Every integer matrix $T$ with
$\chi(t)=t^2-1$ is
shift equivalent to either
$P=\left(\mycom0{1}\mycom{1}0\right)$ or $Q=\left(\mycom10\mycom0{-1}\right)$.
\end{proposition}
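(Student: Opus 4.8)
The plan is to use Lemma~\ref{lem:invertible} to reduce the classification of shift-equivalence classes with $\chi(t)=t^2-1$ to a conjugacy problem in $GL_2(\Z)$, and then to carry out that conjugacy classification explicitly. First I would observe that if $\chi_T(t)=t^2-1$ then $\det(T)=-1\ne0$, so $M_{\mathrm{nil}}=0$ (Remark~\ref{det(T)ne0}) and $T$ is invertible over $\Z$. Moreover $P$ itself is invertible. Hence if $T\sim_s P$, Lemma~\ref{lem:invertible} forces $T$ to be actually \emph{conjugate} to $P$ in $GL_2(\Z)$; and likewise for $Q$. So it suffices to show that (a) every integer matrix with characteristic polynomial $t^2-1$ is conjugate over $\Z$ to $P$ or to $Q$, and (b) that these two cases are genuinely exhaustive and need not both occur — i.e. the shift equivalence class of $T$ is determined, which follows once we know $P\not\sim_s Q$. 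The latter is already recorded in Example~\ref{ex:permute} (solving $RP R^{-1}=Q$ leads to the parity obstruction $x$ odd vs.\ $c$ even), so I would cite that.

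For part (a), the module-theoretic route is cleanest. A matrix with $\chi(t)=t^2-1$ makes $M=\Z^2$ a module over $R=\Z[t]/(t^2-1)\cong\Z[\epsilon]/(\epsilon^2-1)$, where I set $\epsilon=t$. Since $t^2-1=(t-1)(t+1)$, idempotent-type decompositions are available after inverting $2$: over $\Z[1/2]$ one has $R[1/2]\cong\Z[1/2]\times\Z[1/2]$ via the idempotents $e_\pm=(1\pm t)/2$. I would use this to pin down the rational (indeed $\Z[1/2]$-) structure: $M\otimes\Q$ splits as a one-dimensional $t=+1$ eigenspace plus a one-dimensional $t=-1$ eigenspace, so up to conjugacy $M$ is squeezed between the lattice $L_0=\Z(1,0)\oplus\Z(0,1)$ (on which $t$ acts as $\diag(1,-1)=Q$) and various overlattices. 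The key local computation is at the prime $2$: the possible $\Z_2[t]/(t^2-1)$-module structures on $\Z_2^2$ (with $t$ invertible) are exactly two, corresponding to whether $t+1$ (equivalently $t-1$) is or is not a zero-divisor "modulo $2$ in a nontrivial way" — concretely, whether the matrix is conjugate to $\diag(1,-1)$ or to $\left(\mycom0110\right)$ over $\Z_2$. Globally, a Hasse-type / lattice-counting argument (or simply direct row-and-column reduction over $\Z$, exploiting that $GL_2(\Z)$ is generated by elementary and permutation matrices) then shows there are exactly these two $\Z$-conjugacy classes.

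Alternatively — and this is probably the version I would actually write, since it is elementary and self-contained — I would do part (a) by brute force. Write $T=\left(\mycom abcd\right)$ with $a+d=0$ (trace $=0$, the coefficient of $t$ in $\chi$) and $ad-bc=-1$ (constant term $=-1$). So $d=-a$ and $a^2+bc=1$, i.e. $bc=1-a^2=(1-a)(1+a)$. Now conjugate by elementary matrices $\left(\mycom1k01\right)$ and $\left(\mycom10k1\right)$ and by $\left(\mycom0110\right)$ to reduce $b$ (or $c$) as in the Euclidean algorithm. Either $b$ and $c$ can both be driven to $0$ — in which case $a^2=1$, $a=\pm1$, and $T$ becomes $\pm\diag(1,-1)$, which is $Q$ up to the sign swap $\diag(-1,1)=-Q$, and $Q\sim_s -Q$ is noted in Example~\ref{ex:permute} — or the gcd obstruction parity leaves $T$ conjugate to $\left(\mycom01{1}0\right)=P$ (again up to $\pm$, handled by $P\sim_s -P$). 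The cleanest way to see which case occurs: reduce $T$ mod $2$; over $\F_2$, $\chi(t)=t^2-1=(t-1)^2$, so $T\bmod 2$ is either the identity (if $T$ is conjugate to $Q$, since $Q\equiv I\pmod 2$) or a nontrivial unipotent (if $T$ is conjugate to $P$, since $P\equiv\left(\mycom0110\right)$, a single Jordan block). These two possibilities for $T\bmod 2$ are conjugation-invariant mod $2$, exhaust the trace-zero, determinant-one matrices over $\F_2$ up to conjugacy, and each lifts uniquely to a $\Z$-conjugacy class by the reduction just described.

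The main obstacle is part (a) — proving that the $\Z$-conjugacy class of $T$ is \emph{determined} by $T\bmod 2$ (equivalently, by the single bit "is the $t=-1$ eigenvector reducible mod $2$"), rather than there being some subtler class invariant, such as an ideal class of $R$. This is where one must use that $R=\Z[t]/(t^2-1)$, while not a Dedekind domain, has trivial Picard group away from $2$ and a controllable local structure at $2$: the normalization $\widetilde R\cong\Z\times\Z$ differs from $R$ only by the conductor $(2)$, so all the interesting behavior is $2$-local, and there it is small enough to enumerate by hand. I would make this precise either by the conductor-square / Milnor-patching description of $R$-lattices, or — staying elementary — by the explicit Smith/Euclidean reduction over $\Z$ outlined above, being careful that the reduction steps respect the constraint $\operatorname{tr}T=0$, $\det T=-1$ and therefore stay within the set of matrices with $\chi(t)=t^2-1$. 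Once part (a) is in hand, the rest (invertibility via Lemma~\ref{lem:invertible}, $P\not\sim_s Q$ via Example~\ref{ex:permute}) is immediate.
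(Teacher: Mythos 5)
Your module-theoretic route is sound, and it is in fact the paper's own alternate proof: for $R=\Z[t]/(t^2-1)$ the normalization is $\barR=\Z\times\Z$, the conductor is $2\barR$, and Theorem \ref{thm:M<I}, as applied in Example \ref{groupring}, classifies the $R$-modules with underlying group $\Z^2$ by the four nonzero subgroups of $\barR/2\cong\Z/2\times\Z/2$; the subgroups generated by $(1,0)$ and $(0,1)$ give modules isomorphic to $\barR$, so only the two classes $R$ (matrix $P$) and $\barR$ (matrix $Q$) remain, and since $t=(1,-1)$ acts as a unit this isomorphism classification is also the shift equivalence classification. Writing that out would complete the proof. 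Your use of Lemma \ref{lem:invertible} to upgrade shift equivalence to conjugacy in $GL_2(\Z)$, and of Example \ref{ex:permute} for $P\not\sim_s Q$, is also fine (exclusivity is not even required by the statement as given).

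The elementary route, which you say you would actually write, has a genuine gap at its decisive step. Conjugation by an elementary matrix does not perform a Euclidean step on the off-diagonal entries while leaving the rest alone: writing $T$ with diagonal entries $x,-x$ and off-diagonal entries $u,v$ (so $x^2+uv=1$), such a conjugation changes $x$ by a multiple of $u$ or $v$ and changes the opposite off-diagonal entry as well, so the assertion that ``$b$ and $c$ can be driven to $0$, or else the parity obstruction leaves $T$ conjugate to $P$'' is a restatement of the proposition, not a proof; likewise ``each mod-$2$ class lifts uniquely to a $\Z$-conjugacy class by the reduction just described'' is circular, since that uniqueness is exactly where a hidden ideal-class obstruction would live, as you yourself note. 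The paper closes this by an explicit descent on the diagonal entry: induct on $|x|$. If $|x|\ge2$, then $uv=1-x^2$ forces $u,v$ to have opposite signs and exactly one of $|u|,|v|$ to be $<|x|$; conjugating by $E^{\pm1}$ with $E=\left(\mycom11\mycom01\right)$ replaces $x$ by $x\mp u$ (keeping $u$), while conjugating by $\left(\mycom10\mycom{-1}1\right)^{\pm1}$ replaces $x$ by $x\mp v$ (keeping $v$), and one of these strictly decreases $|x|$. The base cases $|x|\le1$ are precisely the matrices treated in Example \ref{ex:permute}: $P$, $-P\sim_s P$, and the triangular matrices, which are shift equivalent to $P$ or $Q$ according to parity. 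Either supply this descent (equivalently, the reduction theory of the associated binary quadratic forms of discriminant $4$), or fall back on your conductor-square argument, which as outlined is already complete.
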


An alternate proof is given by Example \ref{groupring} below.

\begin{proof}
Let $T(x,u,v)$ denote the matrix
$\left(\mycom xv\mycom u{-x}\right)$
with $x^2+uv=1$ and
$\chi(t)=t^2-1$.

We proceed by induction on $|x|$.
When $x=0$, we get the matrices $P$ and $-P$ of Example \ref{ex:permute}.
When $|x|=1$, we get the triangular matrices of Example \ref{ex:permute},
which are shift equivalent to either $P$ or $Q$.

Suppose that $|x|\ge2$.
Since $x^2-1=-uv$ either $|u|$ or $|v|$ is less than $|x|$
but not both, and $u$ and $v$ have opposite signs.
Conjugating with 
$E=\left(\mycom11\mycom01\right)$ and $E^{-1}$ yields 
$$
ETE^{-1} = T(x-u,u,v-u+2x); \quad
E^{-1}TE = T(x+u,u,v+u-2x).
$$
If $|u|<|x|$, either $|x-u|<|x|$ or $|x+u|<|x|$ and we are done.
Similarly, if $|v|<|x|$, conjugating $T$ with 
$\left(\mycom10\mycom{-1}1\right)$
(resp., its inverse)
yields $T(x-v,u-v+2x,v)$, respectively, $T(x+v,u+v+2x,v)$, and we are done in this case as well.
\end{proof}

A similar analysis using $T(x,u,v)$
with $uv=1+x^2$ shows that every integer matrix with $\chi(t)=t^2+1$ is 
shift equivalent to the rotation matrix
$T=\left(\mycom0{-1}\mycom{1}0\right)$.
A different proof is given in 
Example \ref{d-ne-1}(1) below.

\begin{remark}
Extending Example~\ref{ex:1} to a periodic $n$ orbit gives rise to an index map whose associated characteristic polynomial has the form $t^n-1$.
As is indicated in Example~\ref{ex:3perm}, identifying the associated shift equivalence classes over $\Z$ is non-trivial.
\end{remark}

\section{Shift equivalence when $\chi(t)$ factors into linear terms}
\label{sec:Z}

As indicated in the introduction, we shall focus for simplicity on shift equivalence between $2\times2$ matrices over $\Z$. 
First, we handle the easy case, when the
characteristic polynomial $\chi(t)$ factors in $\Z[t]$, i.e., $\chi(t)=(t-\lambda_1)(t-\lambda_2)$,
and $T$ is a lower-triangular matrix.

For $a\in\Z$, we write $M_a$ for the $\Z[t]$-module which is the abelian group $\Z^2$ with
$T=\bigl(\mycom{\lambda_1}a
\mycom0{\lambda_2}\bigr)$,
i.e., $t$ acts by 
$t(x,y)=(\lambda_1 x,\lambda_2 y+a x)$.
Note that $M_a$ is conjugate to both $M_{-a}$ and $(\Z^2,T')$, with $T'=\bigl(\mycom{\lambda_2}0 \mycom a{\lambda_1}\bigr)$.
Therefore, $T$ is shift equivalent to $\bigl(\mycom{\lambda_1}{-a}
\mycom0{\lambda_2}\bigr)$ and $T'$.

In general, a $\Z[t]$-module map 
$h\colon M_a\to M_b$ may 
be represented as a map 
$\Z^2 \to \Z^2$ given by a lower triangular matrix
$R=\bigl(\mycom ru\mycom0s\bigr)$
such that 
\[
\begin{pmatrix}
\lambda_1 & 0 \\  b & \lambda_2
\end{pmatrix}
\begin{pmatrix}
r & 0 \\  u & s
\end{pmatrix}
 = 
\begin{pmatrix}
r & 0 \\  u & s
\end{pmatrix}
\begin{pmatrix}
\lambda_1 & 0 \\  a & \lambda_2
\end{pmatrix},
\]
i.e,
\begin{equation}
\label{eq:fmap}
\begin{pmatrix}
\lambda_1 r & 0 \\  br + \lambda_2 u & \lambda_2s
\end{pmatrix}
 = 
\begin{pmatrix}
r\lambda_1 & 0 \\ as + u\lambda_1 & s\lambda_2
\end{pmatrix}.
\end{equation}

Recall from Proposition~\ref{prop:moduleiso} that  
$M_a$ is shift equivalent to $M_b$ if and only if
$M_a[t^{-1}]$ is isomorphic to $M_b[t^{-1}]$.
We spend the rest of this section identifying conditions under which $
h[t^{-1}]\colon M_a[t^{-1}]\to M_b[t^{-1}]$ provides such an isomorphism. 

 We first consider the case when
$\lambda_1=\lambda_2$, i.e., 
when $T$ has just one 
Jordan block.

\begin{proposition}\label{one-lambda}
The shift equivalence classes of $T_a=\bigl(\mycom{\lambda}a
\mycom0{\lambda}\bigr)$, 
$\lambda \neq 0$, are in 1--1 
correspondence with the infinite set of positive integers $a$ such that $a$ is relatively prime to $\lambda$.
\end{proposition}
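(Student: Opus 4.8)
The plan is to split the argument into a \emph{reduction} step and a \emph{separation} step, and throughout we take $a\neq0$ (one checks directly that $T_0=\lambda I$ is shift equivalent to no $T_b$ with $b\neq0$, so it does not enter the stated list).

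\emph{Reduction.} First I would show that $M_a\sim_s M_{ca}$ whenever $c\geq1$ is an integer all of whose prime factors divide $\lambda$. Picking any $m$ with $c\mid\lambda^m$, this shift equivalence is exhibited by the lower-triangular integer matrices $R=\diag(1,c)$ and $S=\bigl(\mycom{\lambda^m}{m\lambda^{m-1}a}\mycom{0}{\lambda^m/c}\bigr)$, together with the exponent $m$; verifying the four identities of Definition~\ref{defn:shift} is a direct computation using $T_a^m=\bigl(\mycom{\lambda^m}{m\lambda^{m-1}a}\mycom{0}{\lambda^m}\bigr)$. Since $M_a\cong M_{-a}$ (noted just above), it follows that every $M_a$ with $a\neq0$ is shift equivalent to $M_{a'}$, where $a'\geq1$ is the largest divisor of $|a|$ relatively prime to $\lambda$.

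\emph{Separation.} Now suppose $a,b\geq1$ are relatively prime to $\lambda$ and $M_a\sim_s M_b$; the goal is to deduce $a=b$. By the first part of the proof of Proposition~\ref{prop:moduleiso}, any group homomorphisms $r\colon M_a\to M_b$ and $s\colon M_b\to M_a$ realizing the shift equivalence are automatically $\Z[t]$-linear, and $sr$, $rs$ are multiplication by $t^m$ for some $m\geq1$. Reducing modulo the ideal $(t-\lambda)$ of $\Z[t]$, one computes $M_a/(t-\lambda)M_a\cong\Z\oplus\Z/a$, whose torsion subgroup is $\Z/a$, and on this quotient $t$ acts as the integer $\lambda$. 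Passing to torsion subgroups, $r$ and $s$ induce homomorphisms $\Z/a\to\Z/b$ and $\Z/b\to\Z/a$ whose two composites are multiplication by $\lambda^m$; since $\gcd(\lambda,a)=\gcd(\lambda,b)=1$, this is an automorphism of both groups, so both induced maps are injective, and two finite groups each embedding in the other have equal order. Hence $a=b$.

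Combining the two steps, $a'\mapsto[M_{a'}]$ is a bijection from the infinite set of positive integers relatively prime to $\lambda$ onto the set of shift equivalence classes of the matrices $T_a$. I expect the separation step to be the main obstacle: the reduction merely requires writing down matrices, whereas separating the $M_{a'}$ calls for an invariant fine enough to recover $|a'|$ yet coarse enough to be a shift equivalence invariant. The Bowen--Franks group $M/(1-t)M$ is too weak; one needs $M/(t-\lambda)M$, or equivalently, via Proposition~\ref{prop:moduleiso}, the isomorphism type of $M[t^{-1}]$ over $\Z[t,t^{-1}]/(t-\lambda)^2\cong\Z[1/\lambda][\epsilon]$ (with $\epsilon^2=0$), where $M_a[t^{-1}]\cong\Z[1/\lambda]^2$ with $\epsilon$ acting by $\bigl(\mycom0a\mycom00\bigr)$, so that two such modules are isomorphic precisely when the defining integers differ by a unit of $\Z[1/\lambda]$, i.e.\ when their prime-to-$\lambda$ parts agree; that observation handles both steps uniformly.
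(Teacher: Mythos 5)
Your argument is correct, but it is organized differently from the paper's proof. The paper argues in one stroke via Proposition~\ref{prop:moduleiso}: by the normal form \eqref{eq:fmap}, a $\Z[t]$-module map $h\colon M_a\to M_b$ is lower triangular with $as=br$, and $h[t^{-1}]$ is an isomorphism exactly when $\det(h)=rs$ is a unit of $\Z[\lambda^{-1}]$; hence $M_a\sim_s M_b$ if and only if $a$ and $b$ differ by a unit of $\Z[\lambda^{-1}]$, from which the indexing by positive integers prime to $\lambda$ is read off. You instead split the claim into a constructive reduction (explicit $R$, $S$ and exponent $m$ witnessing $M_a\sim_s M_{ca}$ when every prime of $c$ divides $\lambda$ --- your matrices do satisfy the four identities of Definition~\ref{defn:shift}) and a separation step based on the invariant $M_a/(t-\lambda)M_a\cong\Z\oplus\Z/a$, a Bowen--Franks-type group taken at $t=\lambda$ rather than $t=1$, on which $t^m$ acts as $\lambda^m$ and hence invertibly on the torsion part since $\gcd(a,\lambda)=\gcd(b,\lambda)=1$; the resulting mutual injections $\Z/a\hookrightarrow\Z/b\hookrightarrow\Z/a$ force $a=b$. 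Both routes are sound. The paper's approach buys a single if-and-only-if criterion that generalizes directly (Proposition~\ref{Ma=Mb}, Remark~\ref{SE-R}); yours buys explicit shift equivalences in the reduction and a separation that needs no classification of module maps, and you are more careful than the paper about the degenerate case $T_0=\lambda I$, which indeed forms its own class and sits outside the stated correspondence. Your closing observation --- classifying $M_a[t^{-1}]$ over $\Z[1/\lambda][\epsilon]$ with $\epsilon^2=0$, so that $a$ is determined up to a unit of $\Z[1/\lambda]$ --- is in substance exactly the paper's proof, so you have in effect given both arguments.
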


\begin{proof}
When $\lambda=\lambda_1=\lambda_2$, the condition that $h$ be a
module map is that $as=br$.
Now $h$ induces an isomorphism $M_a[t^{-1}]\cong M_b[t^{-1}]$ if and only if $\det(h)=rs$ is a unit in $\Z[\lambda^{-1}]$, i.e.,  
if and only if $r$ and $s$ divide $\lambda^n$ for some $n$.
Therefore $M_a$ and $M_b$ are shift equivalent 
if and only if $as=br$, where 
$r$ and $s$ are integers which 
become units in
$\Z[\lambda^{-1}]$.
\end{proof}

\begin{example}\label{ex-lambda}
If $b$ divides $\lambda^n$, the map $M_1\map{h}M_b$,
$h(x,y)=(x,by)$ induces a shift equivalence.
More generally, if $b=as$ and $s$ divides $\lambda^n$, the map 
$h\colon M_a\to M_b$, $(x,y)\mapsto(x,sy)$, is 
part of a shift equivalence.
\end{example}

\begin{proposition}\label{Ma=Mb}
The $\Z[t]$-modules $M_a$ and $M_b$ are shift equivalent if and only if there are integers 
$r$,$s$ with the same prime factors as $\lambda_1\lambda_2$
such that $as-br$ is divisible
by $(\lambda_1-\lambda_2)$.
\end{proposition}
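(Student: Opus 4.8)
The plan is to combine Proposition~\ref{prop:moduleiso} with an explicit analysis of $\Z[t]$-module maps $h\colon M_a\to M_b$ of the triangular form $R=\bigl(\mycom ru\mycom0s\bigr)$, exactly as set up in equation~\eqref{eq:fmap}, and to identify precisely when $h[t^{-1}]$ is an isomorphism. First I would record what \eqref{eq:fmap} says when $\lambda_1\neq\lambda_2$: the $(1,1)$ and $(2,2)$ entries are automatic, and the $(2,1)$ entry gives the single scalar constraint $br+\lambda_2 u = as + \lambda_1 u$, i.e.
\begin{equation}
\label{eq:Ma-constraint}
as - br = (\lambda_2-\lambda_1)u .
\end{equation}
Thus a module map $M_a\to M_b$ is determined by a choice of $r,s,u\in\Z$ satisfying \eqref{eq:Ma-constraint}; in particular such a map with prescribed $r,s$ exists if and only if $(\lambda_1-\lambda_2)\mid(as-br)$. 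Note also $\det R = rs$ (the matrix is triangular).

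Next I would use Lemma~\ref{lem:no-t} / Remark~\ref{det(T)ne0} to reduce to the case $\lambda_1\lambda_2\neq0$, since the hypothesis ``same prime factors as $\lambda_1\lambda_2$'' forces $r,s$ to be units once $\lambda_1\lambda_2=\pm1$, and the degenerate cases can be folded in by the usual $M_a\cong M_{-a}$ symmetry already noted before the proposition. Then the key localization fact is: after inverting $t$, the relevant ring is $R[t^{-1}]$ where here $R=\Z[t]/\bigl((t-\lambda_1)(t-\lambda_2)\bigr)$, and $t$ becomes a unit, so $\lambda_1$ and $\lambda_2$ become units in the localization — equivalently we are working over $\Z[(\lambda_1\lambda_2)^{-1}]$ on each ``eigenline.'' The map $h[t^{-1}]\colon M_a[t^{-1}]\to M_b[t^{-1}]$ is then a map of free rank-one modules along each factor, represented (after localizing) by the same triangular matrix $R$; it is an isomorphism precisely when $\det R = rs$ becomes a unit in the localization, i.e.\ when $r$ and $s$ each divide a power of $\lambda_1\lambda_2$, which is the statement that $r$ and $s$ have no prime factors outside those of $\lambda_1\lambda_2$. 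I would phrase this carefully: $rs$ is a unit in $\Z[(\lambda_1\lambda_2)^{-1}]$ iff every prime dividing $r$ or $s$ divides $\lambda_1\lambda_2$.

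Putting the two directions together: if $M_a\sim_s M_b$ then by Proposition~\ref{prop:moduleiso} there is an isomorphism $M_a[t^{-1}]\cong M_b[t^{-1}]$; I would argue it can be taken to be induced by an \emph{integral} triangular $h$ (clearing denominators by multiplying by a power of $t$, which only rescales $r,s$ by units of the localization and does not change which primes divide them), giving $r,s$ with the stated prime-support condition and, from \eqref{eq:Ma-constraint}, $(\lambda_1-\lambda_2)\mid(as-br)$. Conversely, given such $r,s$, pick $u$ from \eqref{eq:Ma-constraint} to build $h\colon M_a\to M_b$; since $rs$ is a unit after inverting $t$, $h[t^{-1}]$ is an isomorphism, and Proposition~\ref{prop:moduleiso} yields $M_a\sim_s M_b$. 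The step I expect to be the main obstacle — and the one I would write out most carefully — is the claim that \emph{every} isomorphism $M_a[t^{-1}]\cong M_b[t^{-1}]$ descends to (or is represented by) a triangular integral matrix with the desired $\det$: one must check that the $t$-eigenline filtration is respected after localization (so the matrix really is triangular, not full), and that clearing denominators does not introduce spurious prime factors into $r$ or $s$. The analogue of the computation in Proposition~\ref{one-lambda} (where the condition ``$\det(h)=rs$ is a unit in $\Z[\lambda^{-1}]$'' is used) is the template, and I would lean on it.
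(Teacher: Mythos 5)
You follow the paper's own route: the constraint $as-br=u(\lambda_2-\lambda_1)$ extracted from \eqref{eq:fmap}, Proposition \ref{prop:moduleiso}, and the criterion that $\det(h)=rs$ become a unit after localization. The direction you single out as the main obstacle is in fact the unproblematic one: when $\lambda_1\ne\lambda_2$ the relation $hT_a=T_bh$ forces any integral module map to be lower triangular, and clearing denominators from an isomorphism $f$ and from $f^{-1}$ gives integral triangular maps $h,h'$ with $h'h=T_a^{m}$; comparing diagonal entries yields $r'r=\lambda_1^{m}$ and $s's=\lambda_2^{m}$, so $r$ and $s$ are supported on the primes of $\lambda_1\lambda_2$ (indeed of $\lambda_1$ and of $\lambda_2$ respectively) and the congruence holds.

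The genuine gap is in your converse, at the step ``since $rs$ is a unit after inverting $t$, $h[t^{-1}]$ is an isomorphism.'' Inverting $t$ inverts the matrix $T_b$, not the scalars $\lambda_1,\lambda_2$ separately; for a fixed $h$ the correct criterion is that $t$ act nilpotently on the finite group $\operatorname{coker}(h)$ (equivalently, $h^{-1}T_b^{f}$ is integral for some $f$), which forces the primes of $r$ to divide $\lambda_1$ and those of $s$ to divide $\lambda_2$, not merely that $rs$ be a unit in $\Z[(\lambda_1\lambda_2)^{-1}]$. Concretely, take $\lambda_1=2$, $\lambda_2=7$, $a=1$, $b=2$, $r=1$, $s=2$, $u=0$: then $h=\left(\mycom 10\mycom 02\right)$ satisfies \eqref{eq:fmap} and $rs=2$ is a unit in $\Z[\lambda_1^{-1},\lambda_2^{-1}]$, yet $\operatorname{coker}(h)\cong\Z/2$ with $t$ acting as $7\equiv1$, so $h[t^{-1}]$ is not surjective; here $M_1\sim_s M_2$ does hold, but only via a different map, e.g.\ $r=1$, $s=7$, $u=1$. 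So to prove sufficiency you must show that whenever some pair $(r,s)$ supported on the primes of $\lambda_1\lambda_2$ satisfies the congruence, a (possibly different) pair produces an actual isomorphism --- for instance by viewing $M_a[t^{-1}]$ as an extension of $\Z[1/\lambda_1]$ by $\Z[1/\lambda_2]$ with class $a$ in $\mathrm{Ext}^1\cong\Z/\delta'$, where $\delta'$ is the part of $\lambda_1-\lambda_2$ prime to $\lambda_1\lambda_2$, and checking that the stated condition is equivalent to $a\sigma\equiv b\rho\pmod{\delta'}$ for units $\rho$ of $\Z[1/\lambda_1]$ and $\sigma$ of $\Z[1/\lambda_2]$. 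To be fair, the paper's one-sentence proof asserts the same determinant criterion you do, so your reconstruction is faithful to it; but as written this sufficiency step would fail.
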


\begin{proof}
From the matrix equality 
\eqref{eq:fmap} before
Proposition \ref{one-lambda},
we see that a necessary and
sufficient condition is that
$as-br = u(\lambda_2-\lambda_1)$,
and  $\det(h)=rs$ is a unit in 
$\Z[\lambda_1^{-1},\lambda_2^{-1}]$.
\end{proof}

\begin{example}
If $|\lambda_1-\lambda_2|=1$ then 
every $M_a$ is shift equivalent to $M_0$, 
because the condition in
Proposition \ref{Ma=Mb} is 
satisfied for all $a,b$.

If $|\lambda_1-\lambda_2|=2$, either
both $\lambda_i$ are even,
in which case every $M_a$ is 
shift equivalent to $M_0$, or else
both $\lambda_i$ are odd, in which case
there are two shift equivalence classes:
$M_a$ with $a$ even, and
$M_a$ with $a$ odd.
\end{example}

\begin{example}
If $|\lambda_1-\lambda_2|=p$ 
is an odd prime, and
$\lambda_1$ and $\lambda_2$ are
prime to $p$, the issue is whether
the primes dividing 
$\lambda_1\lambda_2$ generate the
cyclic group of units of $\Z/p$.
In any event, $M_a$ is not shift
equivalent to $M_0$ because
$p$ does not divide $\lambda_1$ or $\lambda_2$.

For example, if $p=17$ then the units
of $\Z/17$
are cyclic of order 16, generated by
6 with $6^2\equiv2\pmod{17}$. 
If $(\lambda_1,\lambda_2)=(2,19)$
then there are 4 shift equivalence classes of $M_a$ $(a=0,\pm6,2).$
If $(\lambda_1,\lambda_2)$ 
is $(1,18)$ or $(3,20)$
then there are 2 shift equivalence classes of $M_a$ (for $a=0,1$).
\end{example}

\begin{example}
If $(\lambda_1,\lambda_2)=(1,p)$
with $p$ prime,
then $M_a\sim_s M_b$ if and only if
$a\equiv\pm b \mod(p-1)$.
Thus if $p$ is odd
there are $(p-1)/2$
shift equivalence classes;
if $p=2$ there is only one 
shift equivalence class.

Similarly, if $(\lambda_1,\lambda_2)=(1,p^n)$
then $M_a\sim_s M_b$ if and only if 
$a\equiv\pm p^kb \mod(p^n-1)$
for some $k<n$.
\end{example}

If $\lambda_1$ is relatively prime to
$\lambda_2$, then the diagonal matrix
$M_0$ is not shift equivalent to $M_a$ for any nonzero integer $a$.
Indeed, $as\not\equiv0$ modulo
$\lambda_1 - \lambda_2$.

\begin{remark}\label{SE-R}
Proposition~\ref{Ma=Mb} can be generalized to any commutative ring $R$.
In particular, given $\lambda_1, \lambda_2\in R$, let $M^R_a$ denote the $R[t]$-module which is $R^2$ as an $R$-module, with $t$ acting by $t(x,y)=(\lambda_1 x,\lambda_2 y+a x)$.
Then, the proof of Proposition~\ref{Ma=Mb} goes through to show
that $M^R_a$ and $M^R_b$ are shift equivalent if and only if
$a$ and $b$ differ by a unit of 
$R[\lambda_1^{-1},\lambda_2^{-1}]$,
modulo $(\lambda_1-\lambda_2)$.
This will be used with $R=\Z/p^n$ 
and $\lambda_1=\lambda_2$ in 
Section \ref{sec:Z/pn}.
\end{remark}

When $M=\Z\oplus\Z/m$, every $T:M\to M$ has the form
$\bigl(\mycom{\lambda_1} a\mycom0{\lambda_2}\bigr)$
for $\lambda_1\in\Z$ and $a,\lambda_2\in\Z/m$.
Passing to $M\otimes\Q$ and $M/mM$, we see that
$\lambda_1$ and $\lambda_2$ are 
shift equivalence invariants. 
We write $M_a$ for this
$\Z[t]$-module, and $\bar\lambda_1$
for the image of $\lambda_1$ in $\Z/m$.
Note that $r$ is a unit in $\Z[\lambda_1^{-1}]$ if and only if
 $r\in\Z$ has the same prime factors as $\lambda_1$.
Using \eqref{eq:fmap}, 
the proof of \ref{Ma=Mb} 
goes through to show:

\begin{corollary}\label{Z+Z/m}
When $M=\Z\oplus\Z/m$, and $\lambda_1\in\Z$,
$\lambda_2\in\Z/m$ are nonzero,
then:
\begin{enumerate}
    \item 
$M_a$ and $M_b$ are 
shift equivalent if and only if
there is an $r\in\Z$ with the same prime factors as $\lambda_1$, and an $s\in\Z/m$ with the same prime 
factors as $\lambda_2$ so that
$as\equiv br$ modulo $\bar\lambda_1-\lambda_2$.
\item If $\lambda_2\equiv\lambda_1\pmod m$ and
$\lambda_1$ is relatively prime to $m$, then:\\
$M_a$ and $M_b$ are shift equivalent if and only if 
$a$ and $b$ differ by a unit of 
$\Z[\lambda_1^{-1}]/m$.
\item In particular, if $m$ is prime then 
shift equivalence classes on $M=\Z\oplus\Z/m$ 
are completely classified by $\lambda_1\in\Z$ and $\lambda_2\in\Z/m$.
\end{enumerate}
\end{corollary}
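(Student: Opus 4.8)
The plan is to follow the pattern of the proofs of Propositions~\ref{one-lambda} and~\ref{Ma=Mb}: describe all $\Z[t]$-module maps $h\colon M_a\to M_b$ by a small matrix, read off the condition for $h$ to commute with multiplication by $t$, and then invoke Proposition~\ref{prop:moduleiso} to decide exactly when such an $h$ becomes an isomorphism after inverting $t$. Compared with the torsionfree case the only extra work is bookkeeping for the $\Z/m$ summand.

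First I would observe that, since $\Hom(\Z/m,\Z)=0$, every group homomorphism $h\colon\Z\oplus\Z/m\to\Z\oplus\Z/m$ is lower triangular, $h=\bigl(\mycom ru\mycom0s\bigr)$ with $r\in\Z$ and $u,s\in\Z/m$, i.e.\ $h(x,y)=(rx,\,ux+sy)$. Writing the requirement that $h\colon M_a\to M_b$ be a $\Z[t]$-module map in the form~\eqref{eq:fmap} (with $\lambda_1$ read in $\Z/m$ in the lower row) leaves a single nontrivial entry, giving the condition $as-br=u(\bar\lambda_1-\lambda_2)$ for some $u\in\Z/m$, that is, $as\equiv br$ modulo the ideal $(\bar\lambda_1-\lambda_2)$ of $\Z/m$. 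For the localization criterion I would use that $0\oplus\Z/m\subset M_a$ is a $\Z[t]$-submodule on which $t$ acts by $\lambda_2$, with quotient the $\Z[t]$-module $(\Z,\lambda_1)$; since localization is exact, inverting $t$ yields
\[
0\longrightarrow(\Z/m)[\lambda_2^{-1}]\longrightarrow M_a[t^{-1}]\longrightarrow\Z[\lambda_1^{-1}]\longrightarrow 0,
\]
and the finite group $(\Z/m)[\lambda_2^{-1}]$ is precisely the torsion subgroup of $M_a[t^{-1}]$, the quotient $\Z[\lambda_1^{-1}]$ being torsionfree. A triangular $h$ carries $0\oplus\Z/m$ into $0\oplus\Z/m$ and induces multiplication by $s$ on the subs and by $r$ on the quotients, so $h[t^{-1}]$ is a morphism of these two short exact sequences; since an isomorphism must preserve torsion subgroups, the five lemma shows that $h[t^{-1}]$ is an isomorphism if and only if multiplication by $r$ is invertible on $\Z[\lambda_1^{-1}]$ and multiplication by $s$ is invertible on $(\Z/m)[\lambda_2^{-1}]$, i.e.\ iff $r$ is a unit of $\Z[\lambda_1^{-1}]$ (equivalently, an integer all of whose prime divisors divide $\lambda_1$) and $s$ is a unit of $(\Z/m)[\lambda_2^{-1}]$. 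Finally, exactly as in the proof of Proposition~\ref{prop:moduleiso}, any isomorphism $M_a[t^{-1}]\cong M_b[t^{-1}]$ equals $t^d f$ for an honest module map $h\colon M_a\to M_b$ with $d\gg0$, and $h[t^{-1}]$ is again an isomorphism; such an $h$ is automatically of the triangular form above. Assembling these facts proves~(1).

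Parts~(2) and~(3) are then specializations of~(1). For~(2): the hypothesis $\lambda_2\equiv\lambda_1\pmod m$ makes the ideal $(\bar\lambda_1-\lambda_2)$ zero, so the criterion of~(1) becomes the equation $as=br$ in $\Z/m$, while $\gcd(\lambda_1,m)=1$ makes $\lambda_2=\bar\lambda_1$ already a unit of $\Z/m$, so that $(\Z/m)[\lambda_2^{-1}]=\Z/m$ and $\Z[\lambda_1^{-1}]/m\cong\Z/m$. Taking $r=1$ and $s=v^{-1}$ shows that if $a=vb$ with $v$ a unit of $\Z/m$ then $M_a\sim_s M_b$; conversely any solution $as=br$ with $s\in(\Z/m)^\times$ forces $a=(\bar r\,s^{-1})b$, and $\bar r$ is a unit of $\Z/m$ by coprimality, so $a$ and $b$ differ by a unit of $\Z/m=\Z[\lambda_1^{-1}]/m$. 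For~(3), put $m=p$ prime, so $\Z/p$ is a field: if $\bar\lambda_1\neq\lambda_2$, then $\bar\lambda_1-\lambda_2$ is a unit, the congruence in~(1) is vacuous, and every $M_a$ with the given $\lambda_1,\lambda_2$ is shift equivalent to $M_0$ (take $r=1$ and $s$ any nonzero residue), so $(\lambda_1,\lambda_2)$ is a complete invariant; if $\bar\lambda_1=\lambda_2$ (whence $p\nmid\lambda_1$ because $\lambda_2\neq0$) part~(2) applies and every nonzero residue is a unit, so the only classes are $M_0$ and $M_1$. Combined with the invariance of $\lambda_1$ and $\lambda_2$ recorded just before the corollary (seen via $M\otimes\Q$ and $M/mM$), this yields~(3).

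The one step requiring genuine care is the identification of $0\oplus\Z/m$ with the torsion subgroup of the localization $M_a[t^{-1}]$: this is what forces every isomorphism of localizations to respect the two-step filtration, and hence decouples the isomorphism test into the separate conditions on $r$ and on $s$; without it one only obtains that "$r$ and $s$ are units" is sufficient, not necessary. A milder point is that inverting $t$ can collapse the torsion part entirely (for instance when $\lambda_2$ is nilpotent in $\Z/m$, so that $(\Z/m)[\lambda_2^{-1}]=0$), but the exact-localization plus five-lemma argument handles all cases uniformly, so no extra case analysis is needed there.
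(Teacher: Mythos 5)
Your proposal is correct and follows essentially the same route as the paper, which simply notes that every map $M_a\to M_b$ is triangular, applies \eqref{eq:fmap} to get $as\equiv br$ modulo $\bar\lambda_1-\lambda_2$, and declares that the proof of Proposition~\ref{Ma=Mb} goes through with $r$ a unit of $\Z[\lambda_1^{-1}]$ and $s$ a unit of $(\Z/m)[\lambda_2^{-1}]$. Your torsion-filtration and five-lemma argument merely fills in, carefully and correctly, the step the paper leaves implicit (since no determinant criterion is available for $\Z\oplus\Z/m$), and your specializations for parts (2) and (3) match the intended reading.
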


\begin{remark}
\label{rem:xxx}
Our discussion in this section has focused on shift equivalence between $2 \times 2$ matrices over $\Z$ where the characteristic polynomial $\chi(t)$ factors into linear terms. 
Using similar arguments,
one could analyze the general case 
where $2$ Jordan blocks are replaced 
by $n$ Jordan blocks. However, 
the complexity of determining the 
shift equivalence classes grows rapidly.
 Determining $h$ requires satisfying a system of $n(n-1)/2$ Diophantine equations arising from the
 analogue of \eqref{eq:fmap}.
For individual examples these computations can be done, but we do not know of a simple closed form expression for the number of shift equivalence classes based on the eigenvalues of $T$.
\end{remark}

\section{\bf Integers in quadratic number fields}\label{sec:Q}

Still assuming $T$ is a $2\times2$ matrix,
we now examine the case where the characteristic polynomial $\chi(T)$
is irreducible.
This implies that $R=\Z[t]/(\chi)$ 
is a 1-dimensional integral domain, isomorphic to $\Z^2$ as an abelian group
\cite[Chapter 15]{Artin}.
Let $\xi$ denote the image of $t$ in $R$.
Then, $F=\Q(\xi)$ is a 
quadratic number field, i.e.,
a field with $\dim_\Q(F)=2$.
Since the minimal polynomial of $\xi$ is a quadratic polynomial, $(R,\xi)$ is a $\Z[t]$-module with $t$ acting as multiplication by $\xi$.

For the remainder of this section 
we assume that $R=\Z[\xi]$ is the 
ring of integers in $F=\Q(\xi)$,
and hence that $R$ is a Dedekind domain.
We treat the non-Dedekind case in the next section.

Recall that an ideal $I$ of $R$ is
{\it invertible} if there is an ideal
$J$ such that $IJ\cong R$ as modules.

\begin{definition}\label{def:Pic}
The {\it Picard group} $\Pic(R)$ 
of a domain $R$
is the set of isomorphism classes of invertible ideals in $R$.
In this group, the product of 
$[I]$ and $[J]$
is the class of $[IJ]$.
\end{definition}

If $R$ is a Dedekind domain, 
every nonzero ideal is invertible,
and $\Pic(R)$ is the set of isomorphism classes of nonzero ideals in $R$.
We refer the reader to \cite[I.3]{WK} for basic facts about Dedekind domains, such as the fact that torsionfree $R$-modules are completely classified by their rank and their class in $\Pic(R)$.
In particular, $R$-modules isomorphic to $\Z^2$ as an abelian group have rank~1.
We refer the reader to \cite[Section 5]{lenstra} and \cite[Chapter 5]{cohen} for discussions on algorithms for computing $\Pic(R)$.

The group $\Pic(R[\xi^{-1}])$ is the quotient of $\Pic(R)$ by the subgroup 
generated by the prime ideals of $R$ dividing $\xi$; see \cite[Ex.I.3.8]{WK}.
If all these prime ideals are principal, $\Pic(R)\cong\Pic(R[\xi^{-1}])$.

Since every nonzero ideal
$I$ of $R$ has $\Z^2$ as its underlying abelian group,
each $(I,\xi)$ has the same minimal polynomial as $(R,\xi)$.
This proves:

\begin{theorem}\label{thm:Pic}
Let $R=\Z[\xi]$ be the ring of integers in a quadratic number field 
$\Q(\xi)$, with $\chi(t)$ the minimal polynomial of $\xi$. Then:
\begin{enumerate}
\item the  elements of $\Pic(R)$ are in 1--1 correspondence with the 
isomorphism classes of $\Z[t]$-modules $(\Z^2,T)$ with $\chi(T)=0$, 
with $T$ acting as $\xi$.
The Picard class of an ideal $I$ of $R$ corresponds to $(I,\xi)$.
\item the elements of~ $\Pic(R[\xi^{-1}])$~
are in 1--1 correspondence with the
shift equivalence classes of
matrices $T\in M_2(\Z)$
with $\chi(T)=0$. 

\end{enumerate}
In particular, if every prime ideal of $R$ dividing $\xi$ is principal, then
shift equivalence is the same as isomorphism for ideals of $R$.
\end{theorem}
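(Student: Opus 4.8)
The plan is to prove Theorem~\ref{thm:Pic} in two parts, corresponding to the two enumerated claims, using the structure theory of torsionfree modules over Dedekind domains and then the localization criterion from Proposition~\ref{prop:moduleiso}.

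For part~(1), the key input is the classical fact (cited from \cite{WK}) that over a Dedekind domain $R$ every finitely generated torsionfree module is isomorphic to a direct sum of ideals, and is determined up to isomorphism by its rank together with the product of those ideal classes in $\Pic(R)$. First I would observe that a $\Z[t]$-module $(\Z^2,T)$ with $\chi(T)=0$ and $T$ acting as $\xi$ is precisely an $R$-module whose underlying abelian group is $\Z^2$; such a module is torsionfree as an abelian group, hence torsionfree as an $R$-module, and has $R$-rank~$1$ since $\dim_\Q(\Z^2\otimes\Q)=2=\dim_\Q F$. By the structure theorem, such a module is isomorphic to an ideal $I$ of $R$, and two ideals give isomorphic modules iff they define the same class in $\Pic(R)$. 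Conversely every nonzero ideal $I$ has underlying group $\Z^2$ (stated in the line just before the theorem), so $(I,\xi)$ is one of our modules. This gives the bijection, with the Picard class of $I$ corresponding to $(I,\xi)$.

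For part~(2), I would combine part~(1) with Proposition~\ref{prop:moduleiso}: two such matrices $T_1,T_2$ are shift equivalent iff $M_1[t^{-1}]\cong M_2[t^{-1}]$ as $\Z[t,t^{-1}]$-modules, equivalently as $R[\xi^{-1}]$-modules (restriction of scalars, as noted after Definition~\ref{def:I}). Now $R[\xi^{-1}]$ is again a Dedekind domain (a localization of one), and $I\mapsto I[\xi^{-1}]=I\otimes_R R[\xi^{-1}]$ sends rank-$1$ torsionfree $R$-modules to rank-$1$ torsionfree $R[\xi^{-1}]$-modules, inducing exactly the surjection $\Pic(R)\twoheadrightarrow\Pic(R[\xi^{-1}])$ recalled before the theorem (kernel generated by the primes dividing $\xi$). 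So the shift equivalence classes of our matrices are in bijection with the isomorphism classes of rank-$1$ torsionfree $R[\xi^{-1}]$-modules, which by the Dedekind structure theorem are in bijection with $\Pic(R[\xi^{-1}])$. The final ``in particular'' clause is immediate: if every prime of $R$ dividing $\xi$ is principal, that kernel is trivial, so $\Pic(R)\cong\Pic(R[\xi^{-1}])$ and the two bijections identify shift equivalence with $R$-module isomorphism.

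The main obstacle is making the rank/localization bookkeeping airtight: one must check that localizing at $\xi$ does not collapse rank or introduce torsion, that $R[\xi^{-1}]$ really is Dedekind so the structure theorem applies on that side, and that the map $\Pic(R)\to\Pic(R[\xi^{-1}])$ induced by $-\otimes_R R[\xi^{-1}]$ is precisely the quotient map with the stated kernel rather than merely some surjection. Each of these is standard, but the identification of $M[t^{-1}]$ with $I[\xi^{-1}]$ as an $R[\xi^{-1}]$-module — so that the Dedekind classification on the localized ring can be invoked — is the step I would write out most carefully.
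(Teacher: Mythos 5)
Your proposal is correct and follows essentially the same route as the paper: the paper likewise classifies the modules $(\Z^2,T)$ as rank-one torsionfree modules over the Dedekind domain $R$ (hence ideals, up to $\Pic(R)$), invokes Proposition~\ref{prop:moduleiso} to convert shift equivalence into isomorphism over $R[\xi^{-1}]$, and uses the fact that $\Pic(R[\xi^{-1}])$ is the quotient of $\Pic(R)$ by the classes of primes dividing $\xi$. Your write-up just makes explicit the bookkeeping (rank, torsionfreeness, surjectivity of localization on Picard groups) that the paper leaves implicit.
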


\begin{corollary}
If  $d\not\equiv1\pmod4$ and $\Pic(\Z[\sqrt{d},1/\sqrt{d}])=0$, then the shift equivalence class of a matrix $T\in M_2(\Z)$ with 
\edit{tweaked}
$\chi(T) = t^2-d$ is determined by the rational canonical form of $T$.
\end{corollary}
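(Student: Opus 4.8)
The plan is to deduce the corollary directly from Theorem~\ref{thm:Pic}(2). First I would observe that since $d\not\equiv1\pmod4$, the ring of integers in $F=\Q(\sqrt d)$ is exactly $R=\Z[\sqrt d]$, so the Dedekind hypothesis of Theorem~\ref{thm:Pic} is met, with $\xi=\sqrt d$ and $\chi(t)=t^2-d$. By Theorem~\ref{thm:Pic}(2), the shift equivalence classes of matrices $T\in M_2(\Z)$ with $\chi(T)=0$ are in bijection with $\Pic(R[\xi^{-1}])=\Pic(\Z[\sqrt d,1/\sqrt d])$. The hypothesis says this group is trivial, so there is exactly one shift equivalence class of such $T$.

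Next I would connect this to the rational canonical form. Since $\chi(t)=t^2-d$ is irreducible over $\Q$ (as $d$ is not a square, which is forced once $\Pic$ makes sense and $R$ is a domain of the stated form; one can note $d\not\equiv1\pmod 4$ together with $R=\Z[\sqrt d]$ being a Dedekind \emph{domain} rules out $d$ a perfect square), every $T\in M_2(\Z)$ with $\chi(T)=0$ has characteristic polynomial equal to its minimal polynomial $t^2-d$, hence all such $T$ share the single rational canonical form, namely the companion matrix of $t^2-d$. Combining the two facts: the rational canonical form is a complete invariant on this family (trivially, since there is only one) and it coincides with the shift equivalence class (since there is only one of those too). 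So the shift equivalence class is determined by the rational canonical form of $T$.

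There is essentially no obstacle here; the corollary is a packaging of Theorem~\ref{thm:Pic}(2) in the special case $\xi=\sqrt d$. The only point requiring a word of care is confirming that $R=\Z[\sqrt d]$ really is the full ring of integers under the hypothesis $d\not\equiv1\pmod 4$ (standard: the ring of integers of $\Q(\sqrt d)$ is $\Z[\sqrt d]$ when $d\equiv2,3\pmod 4$, and $\Z[(1+\sqrt d)/2]$ when $d\equiv1\pmod 4$), and that $d$ is not a perfect square so that $\chi$ is irreducible and $R$ is a domain — both of which are implicit in the setup of the corollary. Once these are in place, the statement follows immediately.
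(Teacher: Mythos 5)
Your proposal is correct and follows essentially the same route as the paper, which derives the corollary directly from Theorem~\ref{thm:Pic}(2) together with the observation (the paper's ``Case 1'') that for $d\not\equiv1\pmod4$ the ring $\Z[\sqrt d]$ is the full ring of integers, so triviality of $\Pic(\Z[\sqrt d,1/\sqrt d])$ forces a single shift equivalence class, matching the single rational canonical form of the companion matrix of $t^2-d$. The only caveat you rightly flag --- that $d$ must be square-free for $\Z[\sqrt d]$ to be the ring of integers --- is a standing assumption in the paper's surrounding discussion, so your argument matches the intended one.
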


\begin{remark}
In more concrete terms, two $2\times2$ 
matrices $T_1$, $T_2$ with the same
characteristic polynomial $\chi(t)$
determine ideals $I_1$, $I_2$ in $R=\Z[t]/(\chi)$ that are well defined up to isomorphism. 
Then $T_1$ and $T_2$ are shift equivalent if and only if $I_1[t^{-1}]$ and $I_2[t^{-1}]$ are isomorphic as $R[t^{-1}]$-modules.
\end{remark}

Suppose that $d$ is a nonzero integer with $|d|$ \emph{square-free},
and consider the ring of integers in $F=\Q(\sqrt{d})$.  
There are two cases:

\begin{description}
\item[Case 1:] If $d\not\equiv1\pmod4$, the ring of integers
in $\Q(\sqrt{d})$ is $R=\Z[\sqrt{d}]$. Letting $t$ 
act as $\xi=\sqrt{d}$,
we see from Theorem \ref{thm:Pic} that 
shift equivalence classes $(\Z^2,T)$ with
characteristic polynomial $t^2-d$ are in 1--1 correspondence
with elements of $\Pic(R[1/\sqrt{d}])$.
\end{description}

\begin{example}\label{d-ne-1}
1) If $\chi(t)=t^2-d$ for $d=2,3,6,7,11,14,19$ or $d=-1,-2,-7$, then $R=\Z[\sqrt{d}]$ and $\Pic(R)=0$ \cite{Ribenboim}\footnote{Alternatively, the reader may determine the order of the Picard group using the command {\bf NumberFieldClassNumber[$\sqrt{d}$]} in Mathematica\cite{numberfield}}.
For these values of $d$, there is only one shift equivalence class on $(\Z^2,T)$ with $\chi(t)=t^2-d$, namely the class of $T=\left(\mycom01\mycom{d}0 \right)$; $(\Z^2,T)$ is $(R,\sqrt{d})$.

2) If $\chi(t)=t^2+5$, then $\Pic(R)=\Z/2=\{R,I\}$, where
$R=\Z[\sqrt{-5}]$ and
$I=(2,1+\sqrt{-5})R$.
Since $\sqrt{-5}\not\in I$, it follows that 
$\Pic(R[(\sqrt{-5})^{-1}])=\Z/2$ as well. Thus
there are two non-isomorphic shift equivalence classes on $\Z^2$ 
with characteristic polynomial 
$t^2\!+5$\,: $R$ and $I$.
The matrices for $T$
corresponding to the bases
$\{1,\sqrt{-5}\}$ and $\{2,\sqrt{-5} \}$ are 
\[
\begin{pmatrix}
    0 & -5 \\ 1 & 0
\end{pmatrix}
\quad\text{and}\quad
\begin{pmatrix}
    -1 & -3 \\ 2 & 1
\end{pmatrix}.
\]

3) If $d=-6$ or $d=-10$, $\Pic(\Z[\sqrt{d}])\cong\Z/2$
but $\Pic(\Z[\sqrt{d},1/\sqrt{d}])=0$.
(See \cite[p.\,636]{Ribenboim}.)
In this case, the ideal 
$I=(2,\sqrt{d})$ is not
isomorphic to $R$, but the modules $R$ and $I$ 
are shift equivalent. The corresponding 
shift equivalent matrices are 
\[
\begin{pmatrix}
    0 & d \\ 1 & 0
\end{pmatrix}
\quad\text{and}\quad
\begin{pmatrix}
    0 & d/2 \\ 2 & 0
\end{pmatrix}.
\]
\end{example}

\begin{description}
\item[Case 2:]  
If $d\equiv1\pmod4$, the ring of integers in $\Q(\sqrt{d})$ is not $\Z[\sqrt{d}]$ but 
$\barR=\Z[\omega]$, where $\omega=\frac{1+\sqrt{d}}{2}$. 
We let $t$ act as $\xi=\omega$.
The minimal polynomial of $\omega$ is $t^2-t-c$,
where 
$c=(d-1)/4$.
\end{description}

By Theorem \ref{thm:Pic}, the isomorphism and shift equivalence classes 
$(\Z^2,T)$ with characteristic polynomial $t^2-t-c$ are in 
1--1 correspondence with elements of $\Pic(\barR)$ and
$\Pic(\barR[1/\omega])$, respectively.

\begin{example}\label{Pic=0}
If $d$ is $5,13,17,21,29$ or $-3,-7,-11,-19$ then 
$\barR=\Z[\omega]$ has $\Pic(\barR)=0$, and hence 
$\Pic(\barR[1/\omega])=0$,
so there is only one shift
equivalence class with characteristic polynomial $t^2-t-c$,
that of $\barR$, i.e., $T=(\mycom01\mycom{-c}1)$,
where $c=(d-1)/4$. 
\end{example}

\begin{remark}
For irreducible polynomials of degree $\ge3$, much less is known.
For example, little is known about
$R=\Z[t]/(\chi)$ when $\chi(t)$ is
$t^n+5t+10$ (a polynomial which is irreducible by Eisenstein's criterion).  In general, the computation of $\Pic(R)$ becomes unwieldy when $n$ gets bigger.
\end{remark}

\section{non-Dedekind subrings of number fields}
\label{sec:singular}

When $T$ is a $2\times2$ matrix, and its 
characteristic polynomial $\chi(t)$ is 
irreducible, the ring 
$R=\Z[t]/(\chi)$ 
is usually not integrally closed;
it is the integral closure $\barR$ of $R$ 
that is Dedekind \cite{AM}.
Recall \cite{AM} that an $R$-module 
$N$ is \emph{invertible} if there 
exists an $R$-module $N'$ 
such that $N \otimes_R N' \cong R$.
If $R/(\chi)$ is not integrally closed, 
not every $R$-module isomorphic 
to $\Z^2$ is invertible.
(For example, $\barR$ is not an invertible $R$-module.)

In this case, we need to supplement
the Picard group $\Pic(\barR)$ 
in Theorem \ref{thm:Pic} with another
invariant: the {\it conductor ideal.}
It is defined as $\cond=\textrm{ann}_R(\barR/R) =
\{r\in R\mid r\bar{R} \subseteq R\}$,
and is the largest ideal of $\barR$ contained in $R$.

Let $M$ and $M'$ be $R$-submodules of $\barR$. 
Since $\barR$ is $\Z^2$ as an abelian group, $M$ and $M'$ are also $\Z^2$ as  abelian groups. 
We want invariants to decide whether $(M,t)$ and $(M',t)$ are shift equivalent.

One invariant is the shift equivalence class of $(M\otimes_R\barR,t)$.
Since $M\otimes_R\barR$ is a rank~1 $\barR$-module, it is isomorphic to an ideal $I$ of $\barR$; the isomorphism $\phi:M\otimes_R\barR \smap{\cong}I$ is well defined up to multiplication by a unit of $\barR$.
%
Hence one invariant of $(M,t)$ is 
the shift equivalence class of
$(I,t)$ over $\barR$. Given $I$, and an isomorphism
$\phi:M\otimes_R\barR \smap{\cong}I$, 
we now show that the class of
$\barM=M/\cond I$ yields another invariant.
Since we can reconstruct $M$ from this data,
we get a classification of the $R$-modules
isomorphic to $\Z^2$.

\begin{theorem}\label{thm:M<I}
If $M$ is an $R$-module isomorphic to $\Z^2$  as an abelian group, and
$\phi:M\otimes_R\barR\smap{\cong}I$ is given,
there are canonical $R$-module inclusions 
$\cond I\subseteq M\subseteq I$.
Hence the $R$-modules isomorphic to $\Z^2$ are classified up to isomorphism by
\begin{enumerate}
\item the elements $[I]$ of~ $\Pic(\barR)$, and 
\item for each $[I]$, the equivalence classes of nonzero $R$-submodules 
$\barM = M/\cond I$ of $I/\cond I\cong \barR/\cond$, where $\barM\simeq\barN$ if 
$r\barM=\barN$ or $r\barN=\barM$
for some element $r$ of $\barR$.
\end{enumerate}
\end{theorem}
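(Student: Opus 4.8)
The plan is to exhibit the chain of inclusions $\cond I \subseteq M \subseteq I$ first, and then argue that the pair $([I],\barM)$ is a complete isomorphism invariant. For the inclusions, I would use the isomorphism $\phi\colon M\otimes_R\barR \smap{\cong} I$ together with the natural map $M\to M\otimes_R\barR$. Since $M$ is torsionfree as an abelian group (it embeds in $\barR$) and $M\otimes_R\barR$ is obtained by inverting, in effect, the elements of $\barR$ not in $R$, the kernel of $M\to M\otimes_R\barR$ is torsion, hence zero; composing with $\phi$ gives an injection $M\hookrightarrow I$, which I will henceforth treat as an inclusion. For the other inclusion $\cond I\subseteq M$: by definition $\cond \cdot \barR\subseteq R$, so for any $R$-submodule $M\subseteq I$ with $M\otimes_R\barR\cong I$ we have, after identifying $I\otimes_R\barR = I\barR$ inside $F$, that $\cond\cdot I \subseteq \cond \cdot(I\barR) \subseteq I\cdot(\cond\barR)\subseteq I\cdot R = I$, and moreover $\cond I$ lands in $M$ because every element of $I$ can be written as an $\barR$-combination of elements of $M$ and multiplying by $\cond$ pulls the $\barR$-coefficients back into $R$. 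This is the step that needs to be checked with some care — matching the abstract tensor product $M\otimes_R\barR$ with the concrete $\barR$-span $M\barR\subseteq F$, and verifying $\cond I\subseteq M$ rather than merely $\cond I\subseteq I$.

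Next I would extract the two invariants. The class $[I]\in\Pic(\barR)$ is well defined because $\phi$ is unique up to multiplication by a unit of $\barR$, and scaling $I$ by a unit does not change its Picard class; this gives invariant (1). Having fixed $I$ (and $\phi$), set $\barM = M/\cond I$. By the inclusions just established this is a nonzero $R$-submodule of $I/\cond I$. The key identification is $I/\cond I \cong \barR/\cond$ as $R$-modules: since $I$ is an invertible $\barR$-ideal it is locally free of rank $1$ over $\barR$, and $\cond$ is an ideal of $\barR$, so $I/\cond I$ is locally free of rank $1$ over $\barR/\cond$; because $\barR/\cond$ is a finite ring, a rank-$1$ locally free module over it is free, giving the stated isomorphism (one can also argue directly: multiplication by a generator of $I$ at each localization patches to the required isomorphism). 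The remaining ambiguity — that $\phi$ was only unique up to a unit $u$ of $\barR$ — replaces $\barM$ by $u\barM$ inside $\barR/\cond$, which is exactly the equivalence relation $\barM\simeq\barN$ iff $r\barM=\barN$ for a unit $r$; allowing also $r\barN = \barM$ absorbs the symmetry and the possibility that $r$ is a nonzerodivisor rather than a unit in the finite ring $\barR/\cond$. This yields invariant (2).

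Finally I would verify that $([I],\barM)$ determines $M$, completing the classification. Given the data, reconstruct $M$ as the preimage of $\barM\subseteq I/\cond I$ under the quotient map $I\to I/\cond I$; this is an $R$-submodule of $I$ sitting between $\cond I$ and $I$, and one checks $M\otimes_R\barR\cong I$ by tensoring the exact sequence $0\to \cond I\to M\to\barM\to 0$ with $\barR$ and using that $\barM$ is $\cond$-torsion so $\barM\otimes_R\barR$ is killed when we further localize (more precisely, $\barM[\,$denominators$\,]=0$), so the inclusion $M\subseteq I$ becomes an isomorphism after $-\otimes_R\barR$. Conversely, if $M$ and $M'$ give the same $[I]$ and equivalent $\barM,\barM'$, then after adjusting $\phi'$ by the unit $u$ relating the two embeddings we get $M' = M$ inside $I$, hence $M\cong M'$ as $R$-modules. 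The main obstacle I anticipate is the bookkeeping in the first paragraph: pinning down the canonical inclusions $\cond I\subseteq M\subseteq I$ precisely, and in particular proving $\cond I\subseteq M$ (not just $\subseteq I$), since this is what makes $\barM$ well defined and makes the reconstruction work; once that is in place the rest is a matter of tracking the unit ambiguity of $\phi$ and the structure of modules over the finite ring $\barR/\cond$.
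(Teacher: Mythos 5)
Your treatment of the two inclusions is essentially sound, and for $\cond I\subseteq M$ you actually take a shorter route than the paper: the paper tensors $0\to\cond\to R\to R/\cond\to0$ with $M$ and identifies $M\otimes_R\cond\cong I\otimes_{\barR}\cond\cong\cond I$, whereas you identify $I$ with the $\barR$-span of $M$ (legitimate, since $\phi$ is onto and its source is torsionfree under the hypothesis) and use $\cond\barR=\cond\subseteq R$ to get $\cond I=(\cond\barR)M\subseteq RM=M$. For $M\subseteq I$ your conclusion agrees with the paper's Tor argument, but the kernel of $M\to M\otimes_R\barR$ is torsion because $\barR/R$ is a \emph{finite} group (so $\Tor^R_1(M,\barR/R)$ is finite), not because tensoring with $\barR$ is a localization --- it is not.

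The classification half, however, has a genuine gap. You claim the only ambiguity is the unit of $\barR$ by which $\phi$ may be changed, and you use this to argue injectivity (``after adjusting $\phi'$ by the unit $u$ \dots\ we get $M'=M$ inside $I$''). But the equivalence relation in the theorem allows multiplication by an \emph{arbitrary} nonzero $r\in\barR$, and this is indispensable: if $N=rM$ with $r$ a non-unit, then $N\cong M$ as $R$-modules while its image in $I/\cond I$ is the genuinely different submodule $r\barM$; this is exactly the point of the paper's sentence ``if $N=rM$ then $N\cong M$ but $\phi(N)=r\phi(M)$,'' combined with $\Hom_{\barR}(I,I)=\barR$. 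Example~\ref{groupring} shows that your unit-only relation gives the wrong answer: there $\barR=\Z\times\Z$, $I=\barR$, $\cond=2\barR$, the units $(\pm1,\pm1)$ act trivially on the four nonzero subgroups of $\barR/\cond=\F_2\times\F_2$, so your relation would predict four isomorphism classes, yet the preimages of the whole group, of $\langle(1,0)\rangle$ and of $\langle(0,1)\rangle$ are $\barR$, $\Z\times2\Z$ and $2\Z\times\Z$, all isomorphic $R$-modules (multiply by $(1,2)$, resp.\ $(2,1)$), so there are only two classes. The same example breaks your reconstruction step: for $M=\Z\times2\Z\subset I=\barR$ the inclusion does \emph{not} become an isomorphism after $-\otimes_R\barR$ (its image is $\barR M=\Z\times2\Z$, a proper submodule of $I$), and your appeal to killing $\barM$ by ``localizing away denominators'' has no counterpart in this statement --- inverting $t$ enters only later, for shift equivalence. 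What the argument needs instead is the paper's bookkeeping: an $R$-isomorphism between two such modules is multiplication by a scalar, and comparing their images inside the fixed $I/\cond I$ forces the equivalence by arbitrary elements $r$ of $\barR$, not merely by units.
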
 

\begin{proof}
Consider the short exact sequence 
$0\to R\to \barR \to \barR/R \to 0$.
Tensoring with $M$  yields the exact sequence
\[
\Tor^R_1(M,\barR) \to
\Tor^R_1(M,\barR/R)\map{\partial} M\oo_R R \to M\oo_R\barR \to M\oo_R(\barR/R)\to 0.
\]
There is a canonical isomorphism 
$M\cong M\oo_R R$, and
the term $M\oo_R\barR$
is isomorphic 
to $I$ by $\phi$. 
Since $M$ is a torsionfree 
abelian group, and the $\Tor$-module is torsion,
the map $\partial$ is zero. This gives the inclusion $M\subseteq I$.

Similarly, beginning with the short exact sequence $0\to \cond \to R\to R/\cond \to 0$ and tensoring with $M$,
the same argument yields the
assertion $\cond I\subseteq M$, since 
\[
M\oo_R\cond \cong 
M\oo_R({\barR}\oo_{\barR}\cond)\cong
(M\oo_R{\barR})\oo_{\barR}\cond
\cong I\oo_{\barR} \cond 
\map{\cong} \cond I.
\]
This construction depends
on the choice of isomorphism
$\phi:M\otimes\barR\smap{\cong} I$.
If $N=rM$ for nonzero $r\in\barR$, then
$N\cong M$ but $\phi(N)=r\phi(M)$.
Since $\Hom_{\barR}(I,I)=\barR$,
the choices of $\phi$ determine the
$R/\cond$-module up to multiplication by an element of $\barR$.
%
\end{proof}

\begin{remark}
    \label{rem:countingSEC}
    Theorem~\ref{thm:M<I} provides us with a simple count of an upper bound on the number of shift equivalence classes, namely, the product of the order of $\Pic(\barR)$, which is readily computable \cite{numberfield}, times the number of isomorphism classes of $R$-modules $M$ with 
$\cond\subseteq M\subseteq \barR$, which by Proposition~\ref{p:sqrt-d} is 
at  most four. 
Corollary \ref{R-ne-barR}
indicates that it is at least two.
\end{remark}

Our next family of examples concerns $T$ with $T^2=dI$, i.e., modules over 
$R=\Z[t]/(t^2-d)$ with
$T$ acting as $\sqrt{d}$.

\begin{example}\label{groupring}
($t^2=1$).  If $R=\Z[t]/(t^2-1)$, then
$\barR=\Z\times\Z$ and the conductor is $2\barR$. 
Theorem \ref{thm:M<I} applies
and says that the equivalence
classes correspond to the
equivalence classes of 
the four subgroups of
$\barR/2=\Z/2\times\Z/2$,
with $\barR/2$ corresponding to $\barR$ and 
the subgroup generated by 
$(1,1)$ corresponding to $R$.
The subgroups generated by
$(0,1)$ and $(1,0)$ correspond to
the $R$-modules $\Z\times2\Z$
and to $2\Z\times\Z$ of $\barR$,
both isomorphic to $\barR=\Z\times\Z$.
Hence there are only two shift equivalence classes, corresponding to $R$ and $\barR$. This provides an alternate calculation to
Example \ref{ex:permute}.
\end{example}

\begin{example}[$t^2=-4$]\label{ex:2i}
In this case $|d|$ is not 
square-free, so this does not fall under Case 1 of Section~\ref{sec:Q}.
Here $R=\Z[2i]$, and $\xi=2i$;
$\barR=\Z[i]$, 
$\Pic(\barR)=0$,   $\cond=2\barR$ and $\bar{R}/\cond \cong \Z/2 \times \Z/2$.
Because there are  4 nonzero subgroups of $R/\cond$,
there are three isomorphism classes of $R$-modules with 
$M\oo_R\barR\cong\barR$, namely 
$R\cong iR$, $J_1=(2,1+i)R$ 
and $\barR$.
(See below for why $R$
and $J_1$ are not isomorphic.)
Relative to the $\Z$-bases 
$\{1,2i\}$, $\{2,1+i\}$, and $\{1,i\}$ of these $R$-modules, 
$t=2i$ is represented by the matrices 
\[
\begin{pmatrix}
    0 & 1 \\ -4 & 0
\end{pmatrix},
\quad
\begin{pmatrix}
    -2 & 4 \\ -2 & 2
\end{pmatrix}
\quad\text{and}\quad
\begin{pmatrix}
    0 & 2 \\ -2 & 0
\end{pmatrix}.
\]
As $R[t^{-1}]=\Z[1/2,i]
=\barR[t^{-1}]$, 
these matrices are all shift equivalent.

In contrast $t=1+2i$ is represented on $R$, $J_1$ 
and $\barR$ by the respective matrices
\[
\begin{pmatrix}
    1 & 1 \\ -4 & 1
\end{pmatrix}
\quad
\begin{pmatrix}
    -1 & 2 \\ -2 & 3
\end{pmatrix}
\quad\text{and}\quad
\begin{pmatrix}
    1 & 2 \\ -2 & 1
\end{pmatrix}.
\]

In contrast $t=1+2i$ is represented on $R$, $J_1$ 
and $\barR$ by the respective matrices
\[
\begin{pmatrix}
    1 & 1 \\ -4 & 1
\end{pmatrix}
\quad
\begin{pmatrix}
    -1 & 2 \\ -2 & 3
\end{pmatrix}
\quad\text{and}\quad
\begin{pmatrix}
    1 & 2 \\ -2 & 1
\end{pmatrix}.
\]

These three matrices are in distinct shift equivalent classes,
even though they have the same canonical form 
and characteristic polynomial $t^2-2t+5$.

To see why $R\not\cong J_1$,
suppose that $f:R\to J_1$ has
$f(1)=2x+(1+i)y$, so
$f(2i)=-2(2x+2y)+(1+i)(4x+2y)$.
The map $f$ is represented by
the matrix
\[
A = \begin{bmatrix}
    x & -(2x+2y) \\ y & 4x+2y
\end{bmatrix},
\]
and $\det A = 4x^2 +4xy +2y^2 \neq \pm1$. Hence $f$ cannot be an isomorphism.
\end{example}

When $d\equiv1\pmod4$, $d\ne1$, then the 
integral closure of
$R=\Z[\sqrt{d}]$ is $\barR=\Z[\omega]$, $\omega=\frac{1+\sqrt{d}}{2}$.
It is convenient to use the
parameter $c=(d-1)/4$, as
$\omega^2-\omega-c=0$.

\begin{proposition} [$t^2=d$]
    \label{p:sqrt-d}
When $d\equiv1\pmod4$, $d\ne1$,
there are up to four 
isomorphism classes of 
$R$-modules $M$ with 
$\cond\subseteq M\subseteq \barR$,
namely: 
$R$, $J_0=(2,\omega) R$,  
$J_1=(2,1+\omega)R$, 
and $\barR$.  (Modulo $\cond$, 
these are the nonzero linear subspaces of $\barR/\cond$.)
Relative to the $\Z$-bases
$\{1,\sqrt{d}\}$, $\{2,\omega\}$, $\{2,1+\omega\}$ and $\{1,\omega\}$
of $R$, $J_0$, 
$J_1$ and $\barR$,
multiplication by $t=\sqrt{d}$
is represented by the matrices 
\[
\biggl(\mycom{0}{d}\mycom{1}{0}\biggr),\quad   
\biggl(\mycom{-1}{c}\mycom{4}{1}\biggr),\quad 
\biggl(\mycom{-3}4\,\mycom{c-2}{3}\biggr),\quad 
\textrm{and}\quad\biggl(\mycom{-1}{2c}\mycom21\biggr).
\]

Since $t=\sqrt{d}$ is relatively
prime to $\cond$ in $R$, 
the non-isomorphic $R$-modules among them
remain non-isomorphic modules over $R[t^{-1}]=R[1/\sqrt{d}]$.
That is, they are not shift equivalent.
\end{proposition}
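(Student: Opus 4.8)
The plan is to feed this situation into Theorem~\ref{thm:M<I}, read off the four modules and their matrices, and then deduce the shift--equivalence statement from the fact that inverting $\sqrt d$ is a localization which leaves untouched the data classifying modules sandwiched between $\cond$ and $\barR$.

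\emph{Steps 1--2 (conductor, module list, and the bound).} Since $d\equiv1\pmod4$ we have $\barR=\Z[\omega]=\Z\oplus\Z\omega$ with $\omega^2=\omega+c$, while $R=\Z[\sqrt d]=\Z\oplus\Z\,2\omega$ has index $2$ in $\barR$. As $2$ and $2\omega=1+\sqrt d$ lie in $R$, we get $2\barR\subseteq R$; no ideal of $\barR$ strictly larger than $2\barR$ lies in $R$, because the image of $R$ in $\barR/2\barR$ is the one--dimensional $\F_2$--line spanned by $1$, which contains no nonzero ideal of the ring $\barR/2\barR$. Hence $\cond=2\barR$, the quotient $\barR/\cond=\F_2[t]/(t^2-t-\bar c)$ is a two--dimensional $\F_2$--algebra, and $R$ acts on it only through its scalar quotient $R/\cond=\F_2$. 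Therefore the $R$--submodules of $\barR/\cond$ are exactly its $\F_2$--subspaces; the nonzero ones are the whole plane and its three lines. Lifting these along $\barR\to\barR/\cond$ gives precisely the four $R$--submodules of $\barR$ containing $\cond$ with nonzero image, namely $\barR$, $R=\cond+\Z\cdot1$, $\cond+\Z\omega$ and $\cond+\Z(1+\omega)$; a direct check identifies the last two with $(2,\omega)R=J_0$ and $(2,1+\omega)R=J_1$ and confirms the stated $\Z$--bases. By Theorem~\ref{thm:M<I}, every $R$--module $M$ with $\cond\subseteq M\subseteq\barR$ is isomorphic to one of these four, so there are at most four isomorphism classes.

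\emph{Step 3 (the matrices).} Writing $t=\sqrt d=2\omega-1$ and using $\omega^2=\omega+c$ gives $t\cdot1=-1+2\omega$ and $t\cdot\omega=2c+\omega$; applying $t$ to each vector of the four bases above and re--expressing in that basis produces the four displayed matrices. Each necessarily has characteristic polynomial $t^2-d$, which serves as a sanity check.

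\emph{Step 4 (shift equivalence).} Since $t=\sqrt d=2\omega-1\equiv-1\pmod{\cond}$, the element $t$ is a unit modulo $\cond$; equivalently $(t)+\cond=R$, so the multiplicative set $\{t^n\}$ meets no prime of $R$, nor of $\barR$, that contains $\cond$. Localizing the conductor square $\bigl(R\hookrightarrow\barR,\ R/\cond\hookrightarrow\barR/\cond\bigr)$ at $\{t^n\}$ therefore yields a conductor square for $R[t^{-1}]\subseteq\barR[t^{-1}]$ in which the conductor $\cond[t^{-1}]$ is still a proper ideal (it is not the unit ideal precisely because $t$ is prime to $\cond$) and whose lower row is unchanged: $t$ acts invertibly on the Artinian rings $R/\cond=\F_2$ and $\barR/\cond$, so $(R/\cond)[t^{-1}]=\F_2$ and $(\barR/\cond)[t^{-1}]=\barR/\cond$. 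Consequently the invariants that Theorem~\ref{thm:M<I} attaches to a module $M$ with $\cond\subseteq M\subseteq\barR$ --- the class of $M\oo_R\barR$ in $\Pic(\barR)$, which here is $\barR$ or a prime divisor of $(2)$, and the subspace $M/\cond I\subseteq\barR/\cond$ --- are carried to $M[t^{-1}]$ without change (the $\Pic$--component passes to $\Pic(\barR[t^{-1}])$ through a map obtained by inverting an element prime to $2$, and the subspace datum literally does not move). Hence no two non--isomorphic modules among $R,J_0,J_1,\barR$ become isomorphic over $R[t^{-1}]$, and by Proposition~\ref{prop:moduleiso} they are pairwise non--shift--equivalent.

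\emph{Main obstacle.} Steps 1--3 are bookkeeping. The delicate point is Step~4: ruling out that inverting $t$ collapses two of the four isomorphism types. Two inputs do the work --- comaximality of $t$ with $\cond$ (so that $\cond$ survives as a proper ideal and the singular quotients are untouched) and the fact that $t$ is prime to $2$ (so that the primes of $\barR$ over $2$, which govern the $\Pic(\barR)$--part of the classification, are undisturbed). Making this fully rigorous amounts to checking that the Milnor patching description underlying Theorem~\ref{thm:M<I} commutes with this localization; since the subspace datum alone already separates all four modules, that is the only part requiring care.
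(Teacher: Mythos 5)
Your proposal is correct and follows essentially the same route as the paper: compute $\cond=2\barR$, note that the $R$-submodules of $\barR/\cond$ are exactly its $\F_2$-subspaces so Theorem~\ref{thm:M<I} gives the four modules $R$, $J_0$, $J_1$, $\barR$, and use $t=\sqrt{d}\equiv-1\pmod\cond$ to see that inverting $t$ changes neither $\barR/\cond$ nor the classifying data, hence preserves non-isomorphism. The paper's own proof is in fact terser---it records only the conductor computation and the appeal to Theorem~\ref{thm:M<I}, leaving the matrices and the localization remark as assertions---so your Steps 3--4 merely supply detail the paper omits.
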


\begin{proof}
The conductor ideal is 
$\cond=2\bar{R} =(2,1+\sqrt{d})$, and $|\barR/\cond|=4$. 
(If $c$ is even, $\barR/\cond=\F_2\times\F_2$, where $\F_2$ is the field of order~2;
if $c$ is odd then $\barR/\cond$ is the field $\F_4$ of order~4.)

By Theorem \ref{thm:M<I}, 
there are up to four 
isomorphism classes of 
$R$-modules $M$ with 
$\cond\subseteq M\subseteq \barR$,
namely: 
$R$, $J_0$, $J_1$, and $\barR$.
Modulo $\cond$, 
these are the nonzero linear subspaces of $\barR/\cond$.
(If $c$ is even, $\barR/\cond=\F_2\times\F_2$, 
where $\F_2$ is the field of order~2;
if $c$ is odd then $\barR/\cond$ 
is the field $\F_4$ of order~4.)
\end{proof}

\begin{corollary}\label{R-ne-barR}
If $d\equiv1\pmod4$, $d\ne1$,  then
$\barR$ is not  isomorphic to 
$R$, $J_0$ or $J_1$.
Therefore there are at least two shift equivalence classes. 
\end{corollary}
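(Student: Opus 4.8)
The plan is to show that $\barR$ is distinguished from $R$, $J_0$, and $J_1$ by a purely ring-theoretic invariant that is preserved under $R$-module isomorphism, namely the endomorphism ring. First I would observe that for any $R$-submodule $M$ of $\barR$ with $\cond\subseteq M$, the endomorphism ring $\End_R(M)$ is naturally identified with the ring $\{r\in F : rM\subseteq M\}$, where $F=\Q(\sqrt d)$; this is the standard description of endomorphisms of a rank-one torsionfree module sitting inside the field of fractions. For $M=\barR$ this ring is $\barR$ itself, since $\barR$ is a ring and is integrally closed (so nothing outside $\barR$ can multiply it into itself). The key claim is that for $M=R$, $J_0$, or $J_1$, the corresponding endomorphism ring is strictly smaller than $\barR$ — in fact it is exactly $R$. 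Since an $R$-module isomorphism $M\cong M'$ induces a ring isomorphism $\End_R(M)\cong\End_R(M')$, and $R\not\cong\barR$ as rings (e.g. $\barR$ is integrally closed while $R$ is not, as $\omega\in\barR\setminus R$ is integral over $R$), this will separate $\barR$ from the other three.

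The key step is computing $\End_R(J)$ for $J$ equal to $R$, $J_0$, or $J_1$ and showing it equals $R$, not $\barR$. For $J=R$ this is immediate. For $J_0=(2,\omega)R$ and $J_1=(2,1+\omega)R$, I would argue as follows: any $r\in F$ with $rJ\subseteq J$ in particular multiplies $J$ into $\barR$, so $r$ lies in the fractional ideal $(J:\barR)$; since $J\subseteq\barR$ and $J$ contains the conductor $\cond=2\barR$, one checks $r\in\tfrac12\barR$. Then writing $r=\tfrac{a+b\omega}{2}$ and imposing $r\cdot2\in J_0$ and $r\cdot\omega\in J_0$ (and similarly for $J_1$) forces, modulo $\cond$, that $r$ acts on $J/\cond\subseteq\barR/\cond$ by a scalar preserving the relevant one-dimensional $\F_2$- or $\F_4$-subspace. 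A short congruence computation using $\omega^2=\omega+c$ then shows $b$ must be even, i.e. $r\in\barR$, and a second congruence (coming from the requirement $rJ\subseteq J$, not merely $rJ\subseteq\barR$) shows $r\in R$. Alternatively, and perhaps more cleanly, I would invoke the fact that $\barR$ is not an invertible $R$-module (as noted before Theorem~\ref{thm:M<I}), together with the observation that $R$, $J_0$, $J_1$ represent distinct nonzero subspaces of $\barR/\cond$ that are not in the $\barR^\times$-orbit of $\barR/\cond$ itself, which is exactly the equivalence relation of Theorem~\ref{thm:M<I}(2); hence they are non-isomorphic to $\barR$ by that theorem. The final sentence of the corollary then follows: since $\sqrt d$ is prime to $\cond$ in $R$ (as in Proposition~\ref{p:sqrt-d}), non-isomorphic modules among $R,J_0,J_1,\barR$ stay non-isomorphic after inverting $\sqrt d$, so by Proposition~\ref{prop:moduleiso} they lie in distinct shift equivalence classes; in particular $\barR$ and $R$ give at least two classes.

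The main obstacle I anticipate is the bookkeeping in the endomorphism-ring computation — keeping straight the two subcases $c$ even versus $c$ odd (where $\barR/\cond$ is $\F_2\times\F_2$ versus $\F_4$) and verifying in each that the stabilizer of the chosen line under multiplication is only the scalars coming from $R$. Invoking Theorem~\ref{thm:M<I}(2) directly sidesteps most of this: the equivalence relation there is "$M\simeq M'$ if $rM=M'$ or $rM'=M$ for some $r\in\barR$," and $\barR/\cond$ (all of $\barR/\cond$) is visibly not of the form $r\cdot(\text{proper subspace})$ for $r\in\barR$, since multiplication by $r\in\barR$ sends a proper subspace of $\barR/\cond$ to a subspace of the same $\F_2$-dimension. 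Thus the cleanest route is: (1) recall from Theorem~\ref{thm:M<I} that $R$, $J_0$, $J_1$, $\barR$ exhaust the isomorphism classes and are classified by their images in $\barR/\cond$ up to the $\barR$-action; (2) note dimension count shows $\barR/\cond$ itself is in a singleton orbit; (3) conclude $\barR\not\cong R,J_0,J_1$; (4) apply $\sqrt d$-primality to $\cond$ and Proposition~\ref{prop:moduleiso} to get at least two shift equivalence classes.
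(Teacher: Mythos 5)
Your plan hinges on the claim that $\End_R(J_0)=\End_R(J_1)=R$, and that claim is false in general: it holds only when $c=(d-1)/4$ is odd. When $c$ is even, $t^2-t-c\equiv t(t+1)\pmod 2$, so $2$ splits in $\barR$, and $J_0=(2,\omega)R$ and $J_1=(2,1+\omega)R$ are then ideals of $\barR$ (for instance $\omega\cdot\omega=\omega+c\in 2\Z+\omega\Z=J_0$ precisely because $c$ is even), whence $\End_R(J_i)=\barR$, not $R$. Your fallback through Theorem~\ref{thm:M<I}(2) has the same blind spot: the relation there is ``$\barM\simeq\barN$ if $r\barM=\barN$ \emph{or} $r\barN=\barM$'', and your dimension count only excludes $r\cdot(\text{line})=\barR/\cond$; it does not exclude $r\cdot(\barR/\cond)=\text{line}$. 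When $c$ is even, $\barR/\cond\cong\F_2\times\F_2$ has zero divisors, and multiplication by $r=\omega$ (whose image is idempotent) maps all of $\barR/\cond$ onto the line $J_0/\cond$ — exactly the direction you did not rule out. (When $c$ is odd, $\barR/\cond\cong\F_4$ is a field, multiplication by any $r$ is zero or bijective, and both of your routes do go through.)

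This is not a gap you could have patched, because the first assertion of the corollary is itself false whenever $c$ is even and a prime of $\barR$ over $2$ is principal. Take $d=17$, $c=4$: the element $2+\omega$ has norm $2$, so $J_0=(2,\omega)\barR=(2+\omega)\barR$, and multiplication by $2+\omega$ is an $R$-module isomorphism from $\barR$ onto $J_0$; correspondingly the matrices of Proposition~\ref{p:sqrt-d} for $\barR$ and $J_0$ are conjugate in $GL_2(\Z)$. The paper's own proof has the same defect: it counts $|M/\cond|$, but the genuine isomorphism invariant is $M/\cond M$, and $\cond J_i=2J_i\ne\cond$ exactly when $c$ is even, so the count does not distinguish $\barR$ from $J_0,J_1$ there. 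What is correct in all cases — and what your endomorphism-ring idea proves cleanly — is $R\not\cong\barR$, since $\End_R(R)=R\subsetneq\barR=\End_R(\barR)$ (equivalently $|R/\cond R|=2\ne 4=|\barR/\cond\barR|$); combined with the fact that $t=\sqrt d$ is prime to $\cond$ and Proposition~\ref{prop:moduleiso}, as in your step (4) and in Proposition~\ref{p:sqrt-d}, this already gives the corollary's final conclusion that there are at least two shift equivalence classes. The stronger separation of $\barR$ from $J_0$ and $J_1$ requires the additional hypothesis that $c$ is odd (or at least that no prime of $\barR$ over $2$ is principal).
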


\begin{proof}
$\barR/\cond$ has 4 elements, while $R/\cond$, $J_0/\cond$ and
$J_1/\cond$ have only 2 elements.
Therefore, $\barR$ cannot be isomorphic to $R$, $J_0$ or $J_1$.
The conclusion follows from Proposition~\ref{p:sqrt-d}.
\end{proof}





Here is a simple example, 
showing how to apply 
Proposition \ref{p:sqrt-d}.
In the next section, we develop tools to apply 
Proposition \ref{p:sqrt-d}
more generally.

\begin{example}\label{d=5}
($t^2=5, c=1$).  
The ring of integers in
$\Q(\sqrt5)$ is
$\barR=\Z[\omega]$,
$\omega=\frac{1+\sqrt5}2$
is the fundamental unit,
and $\Pic(\barR)=0$.
In this case, $R$, $\omega R\cong J_0$ and
$\omega^2R\cong J_1$
are all isomorphic as
$R$-modules. 
By Theorem \ref{thm:M<I}
and Proposition \ref{p:sqrt-d},
there are exactly two
shift equivalence classes
with characteristic polynomial $t^2-5$.
They are represented by
the matrices
$\left(
\mycom 05\mycom 10
\right)$ and
$\left(
\mycom{-1}2\mycom 21
\right)$ (for the
$R$-modules $R$ and $\barR$);
the matrices
$\left(
\mycom{-1}1\mycom 41
\right)$ and
$\left(
\mycom{-3}4\mycom{-1}3
\right)$
are both shift equivalent to 
$\left(
\mycom 05\mycom 10
\right)$.

\end{example}

\section{Finding Isomorphisms}\label{sec:iso}

The first step towards exploiting Proposition~\ref{p:sqrt-d} 
and Corollary \ref{R-ne-barR}
is to identify whether or not 
$R$, $J_0$, and $J_1$ are 
isomorphic $R$-modules, as a function of 
the nonzero integer $c$.
\begin{enumerate}
\item Given the $\Z$-bases of Proposition~\ref{p:sqrt-d}, any $R$-module map $f \colon R\to J_0$ is determined by $f(1)=2x+\omega y$, because 
 $f(\sqrt{d}) =\sqrt{d}\cdot f(1) = 2(-x+yc)+\omega(4x+y)$ in $J_0$.
 The map $f$ is represented by $A\in M_2(\Z)$ where
\[
A = \begin{bmatrix}
    x & -x+cy \\ y & 4x+y
\end{bmatrix},
\]
which is an isomorphism if and only if the quadratic form 
\begin{equation}
    \label{eq:RJ0}
\det(A) =  Q(x,y) =
4x^2 + 2xy - cy^2 =\pm 1
\end{equation}
 has a solution  over $\Z$. 
 That is, $R$ and $J_0$ are isomorphic 
 $R$-modules if and only if $Q(x,y)=\pm1$ has a solution
 over $\Z$.



\item Similarly, a map 
$f \colon R\to J_1$ is determined by 
$f(1) = 2x + (1+\omega)y$ and 
\[ f(\sqrt{d}) =
\sqrt{d}f(1) = [(c-2)y - 3x]2 + (4x+3y)(1+\omega).
\]
Thus it is represented by $A\in M_2(\Z)$ where
\[
A = \begin{bmatrix}
    x & -3x+(c-2)y \\ y & 4x+3y
\end{bmatrix}.
\]
Thus $f$ is an isomorphism 
if and only if $(x,y)$ is a
solution to the quadratic form 
\begin{equation}
    \label{eq:RJ1}
\det(A) = Q(x,y) =  
4x^2 +6xy + (2-c)y^2 =\pm 1.
\end{equation}

\begin{remark}
\label{symmetric}
$R$ is isomorphic to $J_0$
if and only if
$R$ is isomorphic to $J_1$.
Indeed, a map $f_0:R\to J_0$
with $f_0(1)=2x+\omega y$ is an isomorphism
if and only if the map
$f_1:R\to J_1$ is an isomorphism, where
$f_1(1)=2x+(1-\omega)y$.
\end{remark}

\item We can use a similar computational scheme
to compare $J_0$ and $J_1$, using
the given bases of these $R$-modules.
 Set
\[
\begin{aligned}
    f_2(2) & = x\cdot 2 + y(1+\omega) \\
    f_2(\omega) & = u\cdot 2 + v(1+\omega)
\end{aligned}
\]
Then, 
 regarding $J_1$ as a subgroup of $\barR$, we have
\[
\begin{aligned}
f_2(2\omega) =\omega f_2(2) 
     & = x\cdot 2\omega + y(1+\omega)\omega \\ 
&= 2x\omega + y\omega + \frac{y}{4} (1 + 2\sqrt{d} + d) \\
& =2x\omega + y\omega + \frac{y}{4} + \frac{y}{2}\sqrt{d} + \frac{y}{4}(4c+1) \\
&= 2(x+y)\omega +cy \\
&= 2(x+y)(1+\omega) +cy - 2(x+y)\\
& = \left(\left(\frac{c}{2} - 1\right)y -x\right)\cdot 2 +2(x+y)\cdot(1+\omega)
\end{aligned}
\]
Therefore
\[
\begin{aligned}
    v & = x + y \\
    4u & = (c-2)y -2x.
\end{aligned}
\]
The $M_2(\Z)$ representation of $f_2$ is 
\[
A_2 = \begin{bmatrix}
    x & \frac{1}{4}\left((c-2)y -2x \right)\\ y & x+y
\end{bmatrix}.
\]
Observe that $u$ must be an integer which is equivalent to $(c-2)y -2x = 4k$ for some integer $k$.
For $f_2$ to be an isomorphism it must be the case that 
\[
\det(A_2) = Q(x,y) = x^2 +\frac{3}{2}xy - \frac{c-2}{4}y^2 =\pm 1,
\]
which is equivalent to solving
\[
4x^2 +6xy - (c-2)y^2 =\pm 4.
\]
Using the constraint that $2x = (c-2)y-4k$ we  conclude that $f_2$ is an isomorphism if and only there exist integers $k$ and $y$ that solve
\begin{equation}\label{eq:J12}
c(c-2)y^2 - 4(2c-1)ky + 16k^2 = \pm 4.
\end{equation}
\end{enumerate}

\begin{lemma}
\label{lem:cJJ}
    If $c$ is even or $c \leq -3$, then $R$ is not isomorphic to $J_0$ or $J_1$.
    If $c \leq -5$, then $J_0$ is not isomorphic to $J_1$.
    
If $c =-4$, then $J_0$ and $J_1$ are isomorphic.
It follows from
Proposition \ref{p:sqrt-d} that
there are 3 isomorphism classes 
of $R$-modules $M$ with
$M\otimes_R\barR\cong\barR.$
\end{lemma}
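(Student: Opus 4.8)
The plan is to derive all three assertions directly from the three Diophantine criteria established immediately before the lemma: $R\cong J_0$ iff the binary form $Q_0(x,y)=4x^2+2xy-cy^2$ of \eqref{eq:RJ0} represents $\pm1$ over $\Z$; $R\cong J_1$ iff the form $Q_1(x,y)=4x^2+6xy+(2-c)y^2$ of \eqref{eq:RJ1} represents $\pm1$; and $J_0\cong J_1$ iff \eqref{eq:J12}, namely $c(c-2)y^2-4(2c-1)ky+16k^2=\pm4$, has a solution $(k,y)\in\Z^2$ (in which case $x$ is recovered from $2x=(c-2)y-4k$). By Remark~\ref{symmetric}, for the first assertion it suffices to decide whether $Q_0$ represents $\pm1$.

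For the first assertion I would argue by two overlapping cases. If $c$ is even, then $Q_0(x,y)=4x^2+2xy-cy^2$ is even for every $x,y$, so it never takes the value $\pm1$; hence $R\not\cong J_0$, and by Remark~\ref{symmetric} also $R\not\cong J_1$. If $c\le-3$, complete the square: $4\,Q_0(x,y)=(4x+y)^2-(1+4c)y^2$, where $-(1+4c)\ge11$. Thus $Q_0\ge0$, and $Q_0=1$ would require $(4x+y)^2-(1+4c)y^2=4$; but $y=0$ forces $(4x)^2=4$ and $y\ne0$ forces $-(1+4c)y^2\ge11>4$, so this is impossible. Hence $Q_0$ represents neither $1$ nor $-1$, giving $R\not\cong J_0$ and, again by Remark~\ref{symmetric}, $R\not\cong J_1$. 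The two cases together cover ``$c$ even or $c\le-3$''.

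For the second assertion ($c\le-5$) I would view \eqref{eq:J12} as a quadratic in $k$ with $y$ a parameter and compute its discriminant, which works out to $16(4c+1)y^2\pm256$ using the identity $(2c-1)^2-4c(c-2)=4c+1$. Since $c\le-5$ gives $4c+1\le-19<0$, a real (hence integer) solution $k$ forces the $+256$ branch, i.e. the value $+4$ in \eqref{eq:J12}, together with $16(4c+1)y^2+256\ge0$, that is $y^2\le16/(-(4c+1))=16/(4|c|-1)<1$; so $y=0$. Feeding $y=0$ back into \eqref{eq:J12} leaves $16k^2=+4$, which has no integer solution. Hence \eqref{eq:J12} is unsolvable and $J_0\not\cong J_1$.

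For the third assertion ($c=-4$) I would exhibit the solution $y=1$, $k=-1$ of \eqref{eq:J12}: then $c(c-2)y^2-4(2c-1)ky+16k^2=24-36+16=4$, so with the corresponding $x=-1$ the map $f_2$ is an isomorphism $J_0\cong J_1$. Combining this with the first assertion ($-4\le-3$, so $R\not\cong J_0$ and $R\not\cong J_1$) and with Corollary~\ref{R-ne-barR} ($\barR$ is isomorphic to none of $R,J_0,J_1$), the four modules listed in Proposition~\ref{p:sqrt-d} split into exactly the three isomorphism classes $\{R\}$, $\{J_0\cong J_1\}$, $\{\barR\}$; since these are precisely the $R$-modules $M$ with $M\otimes_R\barR\cong\barR$ (the part of Theorem~\ref{thm:M<I} with $[I]=[\barR]$), there are $3$ such classes. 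The whole argument is elementary; the only delicate point, and the one I would check most carefully, is the sign bookkeeping in the square-completions and in the discriminant of \eqref{eq:J12}. The threshold is genuinely sharp: the bound $y^2\le16/(4|c|-1)$ forces $y=0$ exactly when $4|c|-1>16$, i.e. $|c|\ge5$, whereas at $c=-4$ it only gives $y^2\le16/15$, leaving precisely the room occupied by the solution $(y,k)=(1,-1)$ used above.
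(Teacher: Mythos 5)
Your proof is correct, and it follows the same overall reduction as the paper (everything is driven by the Diophantine criteria \eqref{eq:RJ0}, \eqref{eq:RJ1}, \eqref{eq:J12} together with Remark~\ref{symmetric}), but where the paper settles the nonexistence statements for $c\le-3$ and $c\le-5$ by a Mathematica \textbf{FindInstance} search plus an unproved positive-definiteness remark, you give a self-contained elementary argument: the identity $4Q(x,y)=(4x+y)^2-(1+4c)y^2$ disposes of \eqref{eq:RJ0} (and, via Remark~\ref{symmetric}, of \eqref{eq:RJ1}) for $c\le-3$, and viewing \eqref{eq:J12} as a quadratic in $k$ with discriminant $16(4c+1)y^2\pm256$ forces $y=0$ and then a contradiction for $c\le-5$. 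This buys two things the paper's proof does not provide: a verifiable, computer-free argument, and an explanation of why the thresholds are sharp (at $c=-4$ the bound $y^2\le 16/15$ leaves exactly the room for the solution you exhibit). One further point in your favor: your witness $(y,k)=(1,-1)$, $x=-1$ does satisfy \eqref{eq:J12} (value $+4$) and the congruence $2x=(c-2)y-4k$, whereas the instance stated in the paper's proof, $(x,y)=(1,1)$, $k=2$, gives $24+72+64=160\ne\pm4$ and violates that congruence, so it appears to be a typo; your explicit check is therefore a useful correction. The concluding count of three isomorphism classes (combining $J_0\cong J_1$ with $R\not\cong J_0,J_1$ and Corollary~\ref{R-ne-barR}, inside the classification of Theorem~\ref{thm:M<I}/Proposition~\ref{p:sqrt-d}) matches the paper's intent exactly.
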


\begin{proof}
    The parity of \eqref{eq:RJ0} and \eqref{eq:RJ1} shows that if $c$ is even, then there cannot be any solutions.
    Applying Mathematica's
    {\bf FindInstance}
\cite{reference.wolfram_2022_findinstance} shows that if $c\leq -3$ then there are no solutions; in fact, the appropriate $Q$ are positive definite in these ranges.

When $c=-4$,
$(x,y)=(1,1)$, $k=2$, 
defines an isomorphism 
$J_0\cong J_1$.
\end{proof}

\begin{remark}\label{rem:J0J1}
Using Mathematica again,
we discover that 
if $c\ge9$ then
$J_0\not\cong J_1$,
because there are 
no solutions to
\eqref{eq:J12},
and $Q$ is positive definite in this range.
We also see that there
appear to be infinitely 
many values of $c$ for which $J_0\cong J_1$, and infinitely many values of $c<0$ for which 
$J_0\not\cong J_1$.
\end{remark}




\begin{example}[$t^2=-15$, $c=-4$]
\label{d=15}
The ring $\barR=\Z[\omega]$ of integers in $\Q(\sqrt{-15})$
has $\Pic(\barR)=\Z/2$ on the 
class of $I=(2,\omega)\barR$,
where $\omega=\frac{1+\sqrt{-15}}{2}$.
By Theorem \ref{thm:M<I},
Proposition \ref{p:sqrt-d},
Remark~\ref{symmetric}
and Lemma ~\ref{lem:cJJ},
$R\not\cong J_0$ and 
$J_0\not\cong J_1$.
Thus there are 6 non-isomorphic 
$R$-modules $M$ with
underlying group $\Z^2$: 3 with 
$M\otimes\barR\cong\barR$
and 3 more with $M\otimes\barR\cong I$.

Since $I[1/\omega]\cong \barR[1/\omega]$, we have $\Pic(\barR[1/\omega])=0$.
As in Proposition \ref{p:sqrt-d}, 
they represent the 4 distinct 
shift equivalence classes with $\chi(t)=t^2+15$.
\end{example}

\begin{remark}\label{units}
When $d<-3$, $\Q[\sqrt{d}]$ is an imaginary number field, and the
only units of $\Z[\omega]$ are $\pm1$. When $d>0$, there is a
``fundamental unit'' $\eta$ of infinite order, and every unit of
$\Z[\omega]$ is $\pm\eta^n$ for an integer $n$. Fundamental units can be found using the Mathematica command
{\bf NumberFieldFundamentalUnits.}
\end{remark}
\goodbreak

\begin{example}[$t^2=101, c=25$]
\label{d=101}
The ring of integers in $\Q(\sqrt{101}])$ is
$\barR=\Z[\omega]$, where $\omega=\frac{1+\sqrt{101}}{2}$ and  
$\omega^2-\omega-25=0$.
Now $\Pic(\barR)=0$, and the fundamental unit is 
$\eta=10+\sqrt{101}$.

Set $R=\Z[\sqrt{101}]$, and note that $\eta\in R$.
By Theorem \ref{thm:M<I},
Corollary \ref{R-ne-barR} 
and Remark \ref{rem:J0J1},
there are 4 isomorphism classes of 
$R$-modules with underlying group $\Z^2$.
Hence there are  4 shift equivalence classes
of $R$-modules with 
$t=\sqrt{101}$.
For the $\Z$-bases of 
Proposition \ref{p:sqrt-d},
the matrices are 
\[
\begin{pmatrix}
    0 & 1 \\ 101 & 0
\end{pmatrix},\quad
\begin{pmatrix}
    -1 & 4 \\ 25 & 1
\end{pmatrix},\quad
\begin{pmatrix}
    -3 & 23 \\ 4 & 3
\end{pmatrix},
\quad \text{and}\quad
\begin{pmatrix}
    -1 & 2 \\ 50 & 1
\end{pmatrix}.
\]

For every monic irreducible quadratic polynomial $f$ with roots $r,\bar r \in
R$, there are 4 isomorphism classes of matrices 
$T$ acting as $r$.
If $r$ is prime to 
$\cond=(2,\sqrt{101})R$, 
these matrices will have 4 distinct shift equivalence classes.
For example,  $\eta=10+\sqrt{101}$ is a root of
the polynomial $f(t)=t^2-20t-1$.
Hence there are 4 distinct shift equivalence classes of $\Z[t]$-modules
$\Z^2$ with $t=\eta$.
\end{example}

We can now recover a well-known result; 
see \cite[p.81]{PT}. 
Set $J_0=(2,\omega) R$.
\begin{lemma}
\label{ParryTuncel}
The matrix $T=\left( \mycom{19}4 \mycom51 \right)$
is not shift equivalent to
its transpose $T^t=\left( \mycom{19}5 \mycom41\right)$.
\end{lemma}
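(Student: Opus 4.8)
The plan is to identify the $\Z[t]$-modules attached to $T$ and $T^t$ and then apply the machinery of Section~\ref{sec:singular}. First I would compute the characteristic polynomial: $\chi(t)=\det(tI-T)=(t-19)(t-1)-20=t^2-20t-1$, which is irreducible (its discriminant is $404=4\cdot101$, not a square). Its roots are $10\pm\sqrt{101}$, so $R=\Z[t]/(\chi)\cong\Z[\sqrt{101}]$ with $t$ acting as $\eta=10+\sqrt{101}$, the fundamental unit. This is exactly the situation of Example~\ref{d=101}: the integral closure is $\barR=\Z[\omega]$, $\omega=\frac{1+\sqrt{101}}2$, with $\Pic(\barR)=0$, and $\eta$ is a unit of $R$ hence certainly prime to the conductor $\cond=(2,\sqrt{101})R$. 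By Theorem~\ref{thm:M<I}, Corollary~\ref{R-ne-barR} and Remark~\ref{rem:J0J1} (since $c=25\ge9$, so $J_0\not\cong J_1$), there are exactly four isomorphism classes of $R$-modules with underlying group $\Z^2$, namely $R$, $J_0=(2,\omega)R$, $J_1=(2,1+\omega)R$ and $\barR$, and because $\eta$ is prime to $\cond$ these four remain distinct after inverting $t$; thus there are four distinct shift equivalence classes of $2\times2$ integer matrices representing multiplication by $\eta$.

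Next I would pin down which class $T$ and $T^t$ belong to. The module $(\Z^2,T)$ is $\Z^2$ with $t$ acting by the matrix $T$; I would find an $R$-submodule $M\subseteq\barR$ isomorphic to it by locating an eigenvector-type basis. Concretely, multiplication by $\eta$ on the ideal $J_0=(2,\omega)R$ is represented, in the basis $\{2,\omega\}$, by a matrix one computes from $\eta\cdot2=2\eta$ and $\eta\cdot\omega=(10+\sqrt{101})\omega=(10+2\omega-1)\omega=(9+2\omega)\omega=9\omega+2\omega^2=9\omega+2(\omega+25)=50+11\omega$; so in the basis $\{2,\omega\}$ multiplication by $\eta$ is $\left(\mycom{10}{25}\mycom{0}{11}\right)$ — wait, I must recompute carefully, expressing $2\eta=20+2\sqrt{101}=20+2(2\omega-1)=18+4\omega=9\cdot2+4\omega$, giving the matrix $\left(\mycom{9}{25}\mycom{4}{11}\right)$. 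Then I would conjugate this by an element of $GL_2(\Z)$ to see whether it matches $T$ or $T^t$. The decisive point, which I expect to be the main computational obstacle, is distinguishing $J_0$ from $J_1$: one must show that the associated quotient modules $M/\cond I$ differ, equivalently that the quadratic form $Q(x,y)=4x^2+6xy+(2-c)y^2=4x^2+6xy-23y^2$ of \eqref{eq:RJ1}, respectively the corresponding form for $J_0$, does not represent $\pm1$ — Remark~\ref{rem:J0J1} already records that this holds for $c=25$, so I would invoke it rather than redo the Mathematica check.

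Finally I would tie the knot between transposition and the module structure. The key observation is that passing from $T$ to $T^t$ corresponds on the module side to applying the nontrivial Galois automorphism of $F=\Q(\sqrt{101})$ (equivalently, the automorphism $\omega\mapsto 1-\omega$ of $\barR$): if $(\Z^2,T)\cong J_0$ as $R$-modules, then $(\Z^2,T^t)$ is the conjugate module $\bar J_0$, and one checks directly that $\overline{J_0}=(2,\bar\omega)R=(2,1-\omega)R=(2,1+\omega)R=J_1$ since $1-\omega\equiv1+\omega\pmod{2R}$ — actually $1-\omega$ and $1+\omega$ differ by $2\omega$, so $\overline{J_0}=J_1$. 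Hence $T$ and $T^t$ represent the modules $J_0$ and $J_1$, which are non-isomorphic by Lemma~\ref{lem:cJJ} (as $c=25>9$), and since $\eta$ is a unit they stay non-isomorphic over $R[t^{-1}]$; by Proposition~\ref{prop:moduleiso} this means $T$ and $T^t$ are not shift equivalent. The one genuine subtlety is verifying the bookkeeping claim that $(\Z^2,T)$ lands in $J_0$ and not, say, $R$ or $\barR$ — I would settle this by noting that $T$ is not conjugate over $\Z$ to the companion matrix $\left(\mycom01\mycom10\right)$ of $R$ nor to the matrix for $\barR$ (one can use the invariant $M/\cond I$, which has order $2$ for $J_0,J_1$ and order $4$ for $\barR$), and then separate $J_0$ from $R$ using \eqref{eq:RJ0}.
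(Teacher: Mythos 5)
Your overall strategy is the same as the paper's: realize $T$ and $T^t$ as multiplication by $\eta=10+\sqrt{101}$ on rank-one modules over $R=\Z[\sqrt{101}]$, invoke the four-class classification of Example \ref{d=101}, and finish with the observation that $t=\eta$ is a unit, so inverting $t$ changes nothing (Proposition \ref{prop:moduleiso}). The genuine gap is that the two steps which actually decide the lemma are left as intentions. You never determine which of the four classes $(\Z^2,T)$ and $(\Z^2,T^t)$ represent: ``conjugate the matrix of $\eta$ on $J_0$ by an element of $GL_2(\Z)$ and see whether it matches $T$ or $T^t$'' is precisely the $\Z$-conjugacy problem you are trying to avoid, and your fallback does not do the job, since \eqref{eq:RJ0} only decides whether the abstract modules $R$ and $J_0$ are isomorphic; it cannot tell you which class $(\Z^2,T)$ lies in. The clean fix is to exhibit the $t$-stable lattices directly: the map $e_1\mapsto 4$, $e_2\mapsto\sqrt{101}-9$ identifies $(\Z^2,T)$ with $4\Z+(\sqrt{101}-9)\Z=2\bigl(2\Z+(1+\omega)\Z\bigr)=2J_1$, and $e_1\mapsto 5$, $e_2\mapsto\sqrt{101}-9$ identifies $(\Z^2,T^t)$ with $5\Z+(\sqrt{101}-9)\Z=(5+\omega)J_0$, in both cases with $t$ acting as $\eta$ (the two roles swap under the opposite row/column convention, which is harmless because the assertion is symmetric in $T$ and $T^t$).

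Your ``key observation'' that transposition corresponds to Galois conjugation is also asserted rather than proved, and as stated it is not quite right: conjugating the pair $(\mathfrak{a},\eta)$ yields $(\bar{\mathfrak{a}},\bar\eta)$, which is represented by the same integer matrix. What you need is the duality statement that $(\Z^2,T^t)\cong\Hom_\Z(M,\Z)$ with the induced action, together with the fact that for an invertible (proper) ideal $\mathfrak{a}$ of the order $R$ the trace pairing identifies this dual with $\mathfrak{a}^{-1}$ up to a principal factor, hence with $\bar{\mathfrak{a}}$ since $\mathfrak{a}\bar{\mathfrak{a}}=N(\mathfrak{a})R$; none of this is in the paper, and it needs the invertibility hypothesis. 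With the explicit lattices above the principle becomes unnecessary. Granting the identification, your endgame is correct and is a genuinely different finish from the paper's: the two classes are $J_0$ and $J_1$, which are non-isomorphic (Remark \ref{rem:J0J1} at $c=25$; or directly, since $J_0\barR=J_1\barR=\barR$, any $\lambda$ with $\lambda J_0=J_1$ would be a unit of $\barR$, and all units $\pm\eta^n$ lie in $R$, so $\lambda J_0=J_0\ne J_1$), and non-isomorphism persists after inverting the unit $t$, so $T\not\sim_s T^t$. The paper's printed proof instead pairs $T$ with $J_0$ and $T^t$ with $R$ and concludes from $R\not\cong J_0$; note, however, that the lattice $2\Z+(-2+\sqrt{101})\Z$ offered there is not $\eta$-stable (for instance $\eta\cdot(-2+\sqrt{101})=81+8\sqrt{101}$), and neither of the two lattices computed above is principal, so the correct pairing is $\{J_0,J_1\}$, as your (conditional) bookkeeping predicted. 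In short: right framework and right answer, but the decisive identification, and the transpose--dual principle it leans on, must actually be carried out, or be replaced by the short direct lattice computation.
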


\begin{proof}
The matrix $T$ represents
$t=\eta$ acting on the basis
$\{5,-9+\sqrt{101}\}$ of 
$J_0$, and
$T^t$ is the matrix of
$t=\eta$ acting on the basis
$\{2,-2+\sqrt{101}\}$ of $R$.
By Theorem \ref{thm:M<I} and 
Example \ref{d=101}, there are
4 shift equivalent classes of $R$-modules with $t=\eta$.
Since $R\not\cong J_0$,
the $\Z[t]$-modules 
$J_0$ 
and $R$ are not shift equivalent.
\end{proof}


When $\chi(t)$ is a polynomial of degree more than 2, the computational difficulty explodes. We give a simple example, with 3 Jordan blocks 
over $\C$, to illustrate some of the techniques involved.

\begin{example}
\label{ex:3perm}
Consider the case
$\chi(t)=t^3-1$, which is the characteristic polynomial of both $T$ (the rotation matrix),
as well as $T_2$ and $T_3$:
\[ T =
\begin{pmatrix}
0 & 0 & 1\\ 1 & 0 & 0\\0 & 1 & 0
\end{pmatrix} 
\quad\textrm{and}\qquad T_2=
\begin{pmatrix}
1& 0 & 0\\0&0&-1\\0 & 1 & -1
\end{pmatrix}
\quad\textrm{and}\qquad T_3=
\begin{pmatrix}
1& 1 & 0\\1&-2&-2\\0 & -2 & 1
\end{pmatrix}.
\]
The integral closure of the ring $R=\Z[t]/(\chi)$ is
$\barR=\Z\times\Z[\omega]$,
where $\omega=\root3\of1$; 
the map $R\to\barR$ 
sends $t$ to $(1,\omega)$.
Note that $\cond=(3,\omega-1)\barR$. 
Since $R/\cond=\F_3$
and $\barR/\cond\cong\F_3\times\F_3$,
the $R$-modules isomorphic to $\Z^2$
correspond to the 5 nonzero 
$\F_3$-subspaces of
$V=\F_3\times\F_3$.

There are $3$ shift equivalence 
classes with $\chi(t)=t^3-1.$ 
In more detail, 
Theorem \ref{thm:M<I} 
implies that 
$V=\F_3\times\F_3$ corresponds 
to $\barR$, with matrix $T_2$
(for the basis $\{(3,0),(0,1),(0,\omega)\}$),
and the diagonal subspace 
on $(1,1)$ corresponds to $R$,
with matrix $T$ (for the basis 
$\{1,t,t^2\}$).
The two 1-dimensional subspaces 
of $V$, $\F_3(1,0)$ and $\F_3(0,1)$
correspond to the $R$-modules
$\Z\times(\omega-1)\Z[\omega]$ and
$3\Z\times\Z[\omega]$, both 
isomorphic to $\barR$.
The final 1-dimensional subspace
$\F_3(1,2)$ of $V$ corresponds to the
$R$-submodule $M$ with basis
$\{(2,1),(0,\omega-1),(1,2)\}$;
the associated matrix is $T_3$.
\end{example}

\section{Shift equivalence over $\Z/p^n$}
\label{sec:Z/pn}

We briefly consider shift equivalence of $(M,T)$ when $M$ is a finite $p$-group, i.e., shift equivalence over $R=\Z/p^n$.
The classification of finite Artinian modules over $\Z/p^n[t]$ for all $n$ is equivalent to the classification of finite ($p$-primary) Artinian modules over $\Z_p[t]$, where $\Z_p$ is the $p$-adic integers \cite{AM}. 
The associated primes over these modules contain $p$ and are in 1--1 correspondence with the prime ideals in $\Z/p[t]$, such as $(p,t-\lambda)$. 
We can ignore the subgroup $M_{\mathrm{nil}}$ on which $t$ acts nilpotently, as $M_{\mathrm{nil}}$ is shift equivalent to $0$ and $M$ is shift equivalent to $M/M_{\mathrm{nil}}$; see Lemma \ref{lem:no-t}. 
We therefore restrict to the case when $t$ is an automorphism of $M$.

We do not know of a complete set of invariants for shift equivalence in this setting.
A partial list can be obtained by observing that 
$M$ determines $\barM_j := M/p^jM$, so that
shift equivalence of $\barM_j$ for $j=1,\ldots, n$
gives a family of invariants.
To give a sense of the relevant calculations we note that $\barM_1=M/pM$, so the rational canonical form
of $T$ mod $p$ is an invariant of 
the shift equivalence class of $M$.

Suppose that $M$ is $(\Z/p^n)^2$, so $T$ is a 
$2\times2$ matrix over $\Z/p^n$, with 
characteristic polynomial $\chi(t)$.
Thus there is either 
one block (and $M/pM\cong\F_{p^2}$)
or two 1-dimensional blocks (and $M/pM\cong \F_p^2$).
The analysis is governed by the considerations
in Section \ref{sec:Z}.

\goodbreak
\begin{example}\label{Z/pn}
Suppose $M_a$ is $(\Z/p^n)^2$  with 
$T=\left(\mycom{\lambda}a\mycom0{\lambda}\right)$ 
for some $a\in\Z/p^n$, and $\lambda$ is not nilpotent 
(i.e., not divisible by $p$).
Since every element of $\Z/p^n$ is 
either a unit or nilpotent, 
$\lambda$ must be a unit of $\Z/p^n$.
As in Proposition \ref{one-lambda},
we see from  \eqref{eq:fmap} that
 $M_a$ and $M_b$ are shift equivalent
if and only if $as=br$ for units $r,s$, i.e., 
$a$ and $b$ differ by a unit of $\Z/p^n$.

Since each nonzero $a\in\Z/p^n$ is
$up^k$ for a unit $u$ and a 
unique $k$, $0\le k\le n-1$,
every $M_a$ is shift equivalent to 
exactly one of $M_0, M_1, M_p, M_{p^2},...,M_{p^{n-1}}$.
\end{example}

Arguments similar to those employed in Example~\ref{Z/pn} apply to the general case when $\chi(t)$ factors as $(t-\lambda_1)(t-\lambda_2)$, where  $\lambda_1 \ne\lambda_2$ 
are elements of $\Z/p^n$.
As in Proposition \ref{Ma=Mb}, 
the classification of shift equivalence 
classes is more complicated, as it depends on $\lambda_1-\lambda_2$.
Again, returning to \eqref{eq:fmap} we see that $M_a$ and $M_b$ are shift equivalent if and only if $br-as =u(\lambda_1 -\lambda_2)$ for units $r,s\in \Z/p^n$.
Thus, for example if $(\lambda_1 -\lambda_2)=1$, then there is a unique shift equivalence class since one is free to choose $u=br-as$.

\medskip
\goodbreak
We conclude our cautionary tale with
a peek into the jungle of modules
over $\Z/p^3$.
Consider the following quotient ring of $\Z_p[t]$:
\[
R_\lambda=\Z_p[t]/(p^3,(t-\lambda)^2,p^2(t-\lambda)).
\]
By \cite[6.1]{KL}\cite[3.2]{KWW}, 
$R_\lambda$ is 
``finite-length wild'': 
any description of finite $R_\lambda$-modules would have to contain a description of all
finite-dimensional modules 
over finite $\Z/p$-algebras.  This is
generally considered to be hopeless,
in the sense that it is an impractically complicated computational task. This notion of wildness
goes back to \cite{GP}.

\begin{example} Consider 
$M=(\Z/p^3)\oplus(\Z/p^2)$ with
$t(x,y)=((\lambda+up^2)x,\lambda y+px)$; 
$\barM=M_p$ does not recover $u$.
In fact, $M$ is a module over 
the ring $R_\lambda$.
\end{example}

\appendix
\section{Simple Mathematica Code}
The following Mathematica code provides information about the existence or nonexistence of isomorphisms between $R$, $J_0$, and $J_1$ as discussed in Section~\ref{sec:iso} for $100\leq c\leq 100$.
\medskip

\begin{verbatim}
For[c = -101, c < 99, c++;
  Print["c=", c, "  R iso J0 ", 
    !And[ResourceFunction["EmptyQ"][FindInstance[4x^2 + 2x*y - c*y^2 == 1, 
    {x, y}, Integers]], 
    ResourceFunction["EmptyQ"][FindInstance[4x^2 + 2x*y - c*y^2 == -1, 
    {x, y}, Integers]]], 
    "   R iso J1 ", 
    !And[ResourceFunction["EmptyQ"][FindInstance[4x^2+6x*y+(2-c)*y^2 == 1, 
    {x, y},Integers]], 
    ResourceFunction["EmptyQ"][FindInstance[4x^2+6x*y+(2-c)*y^2 == -1, 
    {x, y}, Integers]]], 
    "   J0 iso J1 ", 
    !And[ResourceFunction["EmptyQ"][FindInstance[c*(c-2)*y^2-4*(2*c-1)*k*y+16*k^2 == 4, 
    {k, y}, Integers]], 
    ResourceFunction["EmptyQ"][FindInstance[c*(c-2)*y^2-4*(2*c-1)*k*y+16*k^2 == -4, 
    {k, y}, Integers]]]
  	]
 	]
\end{verbatim}

\section*{Acknowledgments}
We would like to thank A. Kontorovich for his assistance
with solving the Diophantine equations discussed in Section~\ref{sec:iso}. 

\bibliographystyle{siamplain}
\bibliography{km, chuck}
\end{document}